\setlist[enumerate,1]{label=\textup{(\arabic*)}}
\numberwithin{equation}{section}
\newtheorem{theorem}[equation]{Theorem}
\newtheorem{lemma}[equation]{Lemma}
\newtheorem{proposition}[equation]{Proposition}
\newtheorem{corollary}[equation]{Corollary}
\theoremstyle{definition}
\newtheorem{definition}[equation]{Definition}
\newtheorem{example}[equation]{Example}
\theoremstyle{remark}
\newtheorem{remark}[equation]{Remark}
\renewcommand{\phi}{\varphi}
\newcommand{\bndry}{\partial}
\DeclareMathSymbol{\boxprod}{\mathbin}{AMSa}{"03} 
\DeclareMathSymbol{\mixprod}{\mathbin}{AMSa}{"4F} 
\newcommand{\dirsum}{\oplus}
\newcommand{\disjunion}{\sqcup}
\newcommand{\dual}{^\vee}
\newcommand{\includesin}{\hookrightarrow}
\newcommand{\intersect}{\cap}
\newcommand{\iso}{\cong}
\newcommand{\Mackey}[1]{{\underline {#1}}}
\newcommand{\onto}{\twoheadrightarrow}
\newcommand{\susp}{\Sigma}
\newcommand{\tensor}{\otimes}
\newcommand{\union}{\cup}
\newcommand{\C}{{\mathbb C}}
\newcommand{\PP}{\mathbb{P}}
\newcommand{\R}{{\mathbb R}}
\newcommand{\Z}{\mathbb{Z}}
\newcommand{\ZZ}{\mathbb{Z}}
\newcommand{\HS}{\mathbb{H}}
\newcommand{\HH}{H_\GG}
\newcommand{\cwd}[1][]{\widehat{c}_\omega^{\ifthenelse{\equal{#1}{}}{}{{\:#1}}}}
\newcommand{\cxwd}[1][]{\widehat{c}_{\chiw}^{\ifthenelse{\equal{#1}{}}{}{{\:#1}}}}
\newcommand{\cld}[1][]{\widehat{c}_{\lambda}^{\ifthenelse{\equal{#1}{}}{}{{\:#1}}}}
\newcommand{\cxld}[1][]{\widehat{c}_{\chi\lambda}^{\ifthenelse{\equal{#1}{}}{}{{\:#1}}}}
\newcommand{\clod}[1][]{\widehat{c}_{\omega_1}^{\ifthenelse{\equal{#1}{}}{}{{\:#1}}}}
\newcommand{\cxlod}[1][]{\widehat{c}_{\chi\omega_1}^{\ifthenelse{\equal{#1}{}}{}{{\:#1}}}}
\newcommand{\cltd}[1][]{\widehat{c}_{\omega_2}^{\ifthenelse{\equal{#1}{}}{}{{\:#1}}}}
\newcommand{\cxltd}[1][]{\widehat{c}_{\chi\omega_2}^{\ifthenelse{\equal{#1}{}}{}{{\:#1}}}}
\newcommand{\cltensd}[1][]{\widehat{c}_{\omega_1\tensor\omega_2}^{\ifthenelse{\equal{#1}{}}{}{{\:#1}}}}
\newcommand{\cxltensd}[1][]{\widehat{c}_{\chi\omega_1\tensor\omega_2}^{\ifthenelse{\equal{#1}{}}{}{{\:#1}}}}
\newcommand{\cd}[1][]{\widehat{c}^{\ifthenelse{\equal{#1}{}}{}{{\:#1}}}}
\newcommand{\cgd}[1][]{\widehat{c}_\pi^{\ifthenelse{\equal{#1}{}}{}{{\:#1}}}}
\newcommand{\cxgd}[1][]{\widehat{c}_{\chi\pi}^{\ifthenelse{\equal{#1}{}}{}{{\:#1}}}}
\newcommand{\Cpq}[2]{\C^{#1+#2\sigma}}
\newcommand{\Cp}[1]{\C^{#1}}
\newcommand{\Cq}[1]{\C^{#1\sigma}}
\newcommand{\Xpq}[2]{\PP(\Cpq{#1}{#2})}
\newcommand{\Xp}[1]{\PP(\C^{#1})}
\newcommand{\Xq}[1]{\PP(\Cq{#1})}
\newcommand{\chiw}{\chi\omega}
\newcommand{\Grpq}[3]{\mathrm{Gr}_{#1}(\Cpq{#2}{#3})}
\newcommand{\Grp}[2]{\mathrm{Gr}_{#1}(\Cp{#2})}
\newcommand{\xQp}[1]{\chi Q_{#1}}
\newcommand{\Qexp}[1]{Q(\Cp {#1})}
\newcommand{\xQexp}[1]{\chi Q(\Cpq {#1}{#1})}
\renewcommand{\vec}[1]{\accentset{\rightharpoonup}{#1}} 
\newcommand{\gr}{\Diamond}      
\newcommand{\ext}{\mathsf{\Lambda}}     
\newcommand{\rels}[1]{\langle #1 \rangle}
\DeclareMathOperator{\grad}{grad}
\DeclareMathOperator{\Sym}{Sym}
\DeclareMathOperator{\GL}{GL}
\newcommand{\GG}{{C_2}}
\begin{document}

\title[Complex quadrics I]
{The $\GG$-equivariant ordinary cohomology of complex quadrics I: The antisymmetric case}


\author{Steven R. Costenoble}
\address{Steven R. Costenoble\\Department of Mathematics\\Hofstra University\\
  Hempstead, NY 11549, USA}
\email{Steven.R.Costenoble@Hofstra.edu}
\author{Thomas Hudson}
\address{Thomas Hudson, College of Transdisciplinary Studies, DGIST, 
Daegu, 42988, Republic of Korea}
\email{hudson@dgist.ac.kr}

\keywords{Equivariant cohomology, equivariant characteristic classes, quadrics}

\subjclass[2020]{Primary 55N91;
Secondary 14N10, 14N15, 55N25, 57R91}

\abstract
In this, the first of three papers about $\GG$-equivariant complex quadrics,
we calculate the equivariant ordinary cohomology of smooth antisymmetric quadrics.
One of these quadrics coincides with a $\GG$-equivariant Grassmannian,
and we use this calculation to prove an equivariant refinement of the result that there are 27 lines
on a cubic surface in $\PP^3$.
\endabstract

\maketitle
\tableofcontents

\section{Introduction}\label{sec:introduction}
This paper and its two sequels \cite{CH:QuadricsII,CH:QuadricsIII} are devoted to the study of the additive and multiplicative structures of the $\GG$-equivariant ordinary cohomology of non-degenerate complex quadrics endowed with a linear $\GG$-action. In the non-equivariant case, up to change of coordinates, there are essentially two cases, depending on the parity of the dimension of the ambient projective space in which the quadric lives. Inside $\Xp{2p}:=\{[x_1:\cdots:x_p:y_p:\cdots:y_1] \mid x_j,y_j\in\C\}$ one considers the quadric $Q_{2p}$ defined as the zero locus of the polynomial
\begin{equation}\label{evenquadric}
    f=x_1y_1+x_2y_2+\cdots+x_py_p,
\end{equation}
while in $\Xp{2p+1}=\{[x_1:\cdots:x_p:z:y_p:\cdots:y_1] \mid x_j,z,y_j,\in\C\}$ the quadric $Q_{2p+1}$ is the zero locus of  
\begin{equation}\label{oddquadric}
    f=x_1y_1+x_2y_2+\cdots+x_py_p+z^2.
\end{equation}

In both cases the resulting variety can be given a cellular decomposition whose cells correspond to an additive basis for the cohomology. The decomposition depends on a choice of a maximal flag
$\{F_0\subset F_1\subset \cdots \subset F_p\}$
of isotropic subspaces, \textit{i.e.,} subspaces on which the relevant polynomial vanishes identically. The standard choice is to consider 
\[
    F_i:=\bigl\{(x_1,\dots,x_i,0,\dots,0) \mid x_j\in \C\bigr\}
\]
for $i\in\{0,1,\dots,p\}$ and then complete the flag by setting $F_{p+i}:=F_{p-i}^\perp$.
Here, for a subspace $V$,  we denote by $V^\perp$ the subspace  of vectors  which are orthogonal to the elements of $V$ with respect to the symmetric form associated to $f$. 
When one considers the multiplicative structure, an interesting feature appears: Unlike for projective space, the cohomology ring of the quadric is not generated solely by $\cd:=c_1(O(1))$, the first Chern class of the dual of the tautological line bundle.
It is necessary to add as additional generators the classes dual to the projectivizations of some maximal isotropic  subspaces. In the odd case $Q_{2p+1}$ this issue can be bypassed by inverting 2, but in the even one it is unavoidable.

Being the case most relevant to this paper, let us describe in detail the cohomology ring of $Q_{2p}$, as calculated for instance in \cite{EdidinGraham:quadricbundles}. In order to illustrate the parallel with the equivariant case, we introduce the following notation. For $s\in\{0,1,\dots,p\}$ we consider the maximal isotropic subspace
\[
    M_s:=\bigl\{(x_1,\dots,x_{p-s},0,\dots,0,y_p,\dots,y_{p-s+1},0,\dots,0) \mid x_j,y_j\in \C\bigr\} \subset \Cp{2p}\,,
\]
where the index $s$ represents the codimension of $M_s\intersect F_p$ in $F_p$,
the maximal isotropic subspace in the reference flag.
Edidin and Graham in \cite{EdidinGraham:quadricbundles} describe the cohomology as an algebra over $\Z = H^*(*;\Z)$,
but here we describe it as a cohomology over $R^*:=H^*(BU(1),\Z)\iso\Z[\cd]$, which will be more convenient
for the comparison with the equivariant case.
As an algebra over  $R^*$,
the cohomology of  $H^*(Q_{2p},\Z)$ is given by the quotient 
\begin{equation}
R^*\bigl[m_{[0]},m_{[1]}\bigr]/I,
\end{equation}
where $m_{[0]}$ and $m_{[1]}$ are, respectively, the cohomology classes $[\PP(M_0)]^*$ and $[\PP(M_1)]^*$ 
Poincar\'e dual to $\PP(M_0)$ and $\PP(M_1)$ (which we shall refer to in short as their fundamental classes),
and $I$ is the ideal generated by the relations 
\begin{equation}\label{relnoneq}
    \text{(i)}\ \cd[p-1]=m_{[0]}+m_{[1]}\,, \qquad \text{(ii)}\ \ \cd\,m_{[0]}=\cd\,m_{[1]}
\end{equation}
\begin{align*}
    \mathllap{\text{and}\qquad}
    \text{(iii)} \ m_{[0]}\,m_{[1]}=
    \begin{cases} 
        \hfill 0 \hfill & \text{if $p$ is odd}\\
        \cd[p-1]m_{[0]}&\text{if $p$ is even.}
    \end{cases}
\end{align*}
The notation $m_{[s]}$ indicates that the fundamental class $[\PP(M_s)]^*$ depends only on the parity $[s]$ of
the codimension $s$,
so that we could take, for example, $m_{[0]}$ to be represented by 
any $\PP(M_s)$ with $s$ even.

The goal of this paper is to generalise this presentation to the case in which $Q_{2p}$ is made into a $\GG$-space
(denoted $\xQp{2p}$) by considering it as a subspace of $\Xpq{p}{p}$, the projectivisation of the direct sum of $p$ copies of $\C$ with the trivial action of $\GG$ and $p$ copies of $\C^\sigma$, the complex numbers with the sign action.
With our previous notations, the $x_j$s will be the variables with trivial action, while $\GG$ acts as $-1$ on the $y_j$s. We refer to the resulting quadric $\xQp{2p}$ as \textit{antisymmetric}, because the action of $\GG$ on the polynomial ring maps $f$ to $-f$. It is not difficult to see that, if a nondegenerate antisymmetric quadric sits inside $\Xpq pq$, then we must have $p=q$, because variables with different actions need to be paired with one another.

Some changes in the description of the cohomology are dictated by the different setting. The cohomology theory we use is the equivariant ordinary cohomology with extended grading developed in \cite{CostenobleWanerBook}; see the Appendix for a brief overview. Thus, we will describe the cohomology as an algebra over $\mathcal{R}^\diamond:=\HH^\gr(BU(1))$, the ordinary cohomology of the infinite projective space $\Xpq{\infty}{\infty}$ with its extended grading~$\diamond$, as computed in \cite{Co:InfinitePublished} and summarized in the Appendix. 
This ring is generated by classes $\cwd$ and $\cxwd$, the Euler classes of 
$\omega\dual$ and $\omega\dual\tensor\Cq{}$,
respectively, where $\omega$ is the tautological line bundle over $\Xpq{\infty}{\infty}$,
and two more classes $\zeta_0$ and $\zeta_1$.

But there are more interesting changes as well.
As we have seen,
in the classical setting, the maximal isotropic subspaces $M_s$ divide into two classes, depending on the parity of their intersections with the reference space $F_p$, with members of one class representing $m_{[0]}$ and members of the other representing $m_{[1]}$.
In the equivariant setting this dichotomy breaks down further:
The projectivizations of the subspaces $M_s$ have different $\GG$-actions and give rise to distinct cohomology classes. For $s\in\{0,\dots,p\}$ we will denote by $m_s$ the equivariant fundamental class $[\PP(M_s)]^*$. We are now in position to state our main theorem. 

\begin{theorem}\label{thm:intro}
As an algebra over $\mathcal{R}^\diamond$, the ordinary cohomology $\HH^\gr(\xQp{2p})$ is generated
by the fundamental classes
\[
m_s:=[\PP(M_s)]^*
\]
with $s\in\{0,\dots,p\}$. These classes are such that
$\cwd[p-s] m_s$ is infinitely divisible by $\zeta_0$, while $\cxwd[s] m_s$
is infinitely divisible by $\zeta_1$. Furthermore, the following  relations hold: 
\begin{alignat*}{3}
 \text{(i)}&\qquad& \cwd[s]\,\cxwd[p-1-s] &= \zeta_0 m_{s+1} + \zeta_1 m_s &\qquad&\text{for $0\leq s \leq p-1$}\,; \\
 \text{(ii)} &&  \cxwd m_{s+1} &= \cwd m_{s} &&\text{for $0 \leq s \leq p-1$}\,; \\
 \text{(iii)} && m_{s} m_{p-s} &= 0 &&\text{for $0\leq s \leq p$\,. } 
\end{alignat*}
These relations generate all the relations in the algebra. \qed
\end{theorem}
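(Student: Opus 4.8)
The plan is to split the problem into an additive computation of $\HH^\gr(\xQp{2p})$ as a module over $\mathcal{R}^\diamond$ and a multiplicative computation of the relations, using throughout the observation that the two $\GG$-fixed components of $\xQp{2p}$ are exactly $\PP(M_0)=\PP(\C^p)$ and $\PP(M_p)=\PP((\Cq{})^p)$, each a copy of $\CP^{p-1}$ lying entirely inside the quadric. First I would determine the $\mathcal{R}^\diamond$-module structure by giving $\xQp{2p}$ an equivariant cell structure refining the classical isotropic Schubert decomposition attached to the reference flag; each cell is a representation cell whose representation records the $\GG$-action on its defining coordinates, so the cellular filtration yields an explicit finite list of $\mathcal{R}^\diamond$-module generators containing the classes $m_s=[\PP(M_s)]^*$. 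The aim of this step is to fix a module basis---hence the \emph{size} of the cohomology---and to check that every Schubert class is an $\mathcal{R}^\diamond$-combination of the $m_s$, which gives the generation statement, since $\cwd,\cxwd,\zeta_0,\zeta_1$ already lie in $\mathcal{R}^\diamond$.

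Next I would prove the relations. Relation (iii), $m_s m_{p-s}=0$, is a statement in the degree of the fundamental class: $m_s$ and $m_{p-s}$ have complementary codimension, so their product is computed by the (excess) intersection of $\PP(M_s)$ and $\PP(M_{p-s})$, which meet in the linear space $\PP(M_s\cap M_{p-s})$ of dimension $2\min(s,p-s)-1$; the excess-intersection contribution (or, for $s=0,p$, the disjointness) forces the product to vanish. Relation (ii), $\cxwd\,m_{s+1}=\cwd\,m_s$, encodes the transition from $M_s$ to $M_{s+1}$ obtained by trading one trivial coordinate for one sign coordinate, and I would obtain it by comparing the restrictions of the Euler classes $\cwd$ (of $\omega\dual$) and $\cxwd$ (of $\omega\dual\tensor\Cq{}$) to the relevant Schubert subvarieties, whose normal data differ only in that one direction. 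Relation (i) is the genuinely equivariant one: by degree reasons and the module basis, the mixed product $\cwd[s]\,\cxwd[p-1-s]$ must be an $\mathcal{R}^\diamond$-combination of $m_s$ and $m_{s+1}$, and I would pin down the coefficients by restricting both sides to the two fixed copies of $\CP^{p-1}$, where each $m_t$ restricts to a power of the hyperplane class and $\zeta_0,\zeta_1$ to their known fixed-point images, producing a triangular linear system that forces the coefficients to be $\zeta_0$ and $\zeta_1$ as stated.

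For the divisibility assertions, iterating relation (ii) gives $\cwd[k]m_s=\cxwd[k]m_{s+k}$ and $\cxwd[k]m_s=\cwd[k]m_{s-k}$, so that $\cwd[p-s]m_s=\cxwd[p-s]m_p$ and $\cxwd[s]m_s=\cwd[s]m_0$; the claims then reduce to infinite $\zeta_0$-divisibility of classes supported on the sign-fixed component $\PP(M_p)$ and infinite $\zeta_1$-divisibility of classes supported on the trivial-fixed component $\PP(M_0)$, which follows from the restriction to these $\CP^{p-1}$'s together with the known $\zeta$-divisibility in $\mathcal{R}^\diamond=\HH^\gr(BU(1))$.

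Finally, for completeness I would compare the $\mathcal{R}^\diamond$-algebra $A$ presented by the $m_s$ modulo (i)--(iii) with $\HH^\gr(\xQp{2p})$: relations (i) and (ii) put every monomial in the $m_s$ and the Euler classes into a normal form supported on a finite $\mathcal{R}^\diamond$-spanning set, and matching that set against the module basis of the first step forces the evident surjection $A\to\HH^\gr(\xQp{2p})$ to be an isomorphism. The main obstacle I anticipate is relation (i) together with the divisibility: these carry all of the new equivariant content, since the forgetful map only recovers the nonequivariant shadow $\cd[p-1]=m_{[0]}+m_{[1]}$ and cannot by itself produce the $\zeta_0,\zeta_1$ weighting---that must be extracted from a careful fixed-point analysis using the structure of $\mathcal{R}^\diamond$.
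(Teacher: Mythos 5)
Your overall architecture---additive structure first, then the relations, then matching a presentation against a module basis---is the same as the paper's, your plan for relation (ii) is essentially the paper's geometric argument, and your plan for relation (i) (determine coefficients against a known basis by restricting to fixed sets and to the nonequivariant theory) is precisely the algebraic alternative the paper says it could have used in place of its geometric verification. The serious problem is relation (iii). You assert that ``the excess-intersection contribution \ldots forces the product to vanish,'' but nothing forces this: the excess-intersection formula expresses $m_s m_{p-s}$ as a pushforward of the Euler class of an excess bundle supported on $\PP(M_s\intersect M_{p-s})$, and that Euler class has no a priori reason to vanish. Indeed, your argument pattern proves too much: nonequivariantly, $\PP(M_0)$ and $\PP(M_2)$ also have complementary codimension and meet in a linear space whose affine cone has even codimension in each, yet $[\PP(M_0)]^*[\PP(M_2)]^* = m_{[0]}^2 = \cd[p-1]m_{[0]} \neq 0$ when $p$ is odd. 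So the vanishing for the specific pairs $(s,p-s)$ is special geometry, not dimension counting; moreover the equivariant excess-intersection formula is not even available in this paper (the authors defer its proof to the sequel). The paper's actual argument avoids all of this: it observes that $m_{p-s}$ is represented not only by $j_s(\Xpq s{(p-s)})$ but also by $i_s(\Xpq s{(p-s)})$---this is where the even codimension $2\min\{s,p-s\}$ of the intersection of the affine cones is used, to place the two isotropic subspaces in the same family so that the two embeddings give the same class---and $i_s(\Xpq s{(p-s)})$ is genuinely disjoint from the representative $j_{p-s}(\Xpq{(p-s)}s)$ of $m_s$, whence the product is zero.

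There are two further gaps. First, your additive step is unsubstantiated: with Burnside-ring coefficients and the extended $RO(\Pi X)$-grading, an equivariant Schubert (representation-cell) decomposition does not formally yield freeness or a basis---attaching maps can contribute differentials, and freeness here is a theorem, not a consequence of having cells. The paper instead gets the additive structure from the cofibrations of Proposition~\ref{prop:xo2 cofibration} relating $\xQp{2p}$ to projective spaces, and the crux is Lemma~\ref{lem:basis}: for each coset $n\Omega_1+RO(\GG)$ one chooses the correct $s=s_n$ and checks, against the explicit bases for $\Xpq{s}{(p-s)}$, that $i_s^*$ is surjective in those gradings, so the long exact sequence splits (Proposition~\ref{prop:xo2 splitting}). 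Without this, your final counting step has no basis to match against. Second, your ``normal form'' claim for completeness glosses over the mixed products $m_t m_{t'}$ with $t'\neq p-t$, which the relations do not directly address; the paper needs an induction using relation (i), relation (iii), and the divisibility classes to rewrite such products, and it must also adjoin the divided elements $\zeta_0^{-k}\cwd[p-s]m_s$ and $\zeta_1^{-k}\cxwd[s]m_s$ as explicit generators---the algebra is not generated over $\mathcal{R}^\diamond$ by the $m_s$ alone. (Your reduction of the divisibility claims via relation (ii) is fine in spirit, but the divisibility itself comes from the pushforward being a module map together with divisibility in $\HH^\gr(\Xpq{(p-s)}{s})$, not from restriction to the fixed components.)
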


The class $m_s$ restricts nonequivariantly to $m_{[s]}$.
Because $\cwd$ and $\cxwd$ are both mapped to $\cd$ and $\zeta_0$ and $\zeta_1$ both restrict to 1,
the first two relations  directly recover their nonequivariant counterparts (\ref{relnoneq},\,i,\,ii). The third equality requires some more interpretation since, depending on the parities of $p$ and $s$, it will restrict to the vanishing of either $m_{[0]}^2$, $m_{[0]}\,m_{[1]}$, or $m_{[1]}^2$. Each of these expressions has a formula analogous to (\ref{relnoneq},\,iii) that gives 0 for the appropriate choice of parities.
This is discussed in more detail in Section \ref{sec:restrictions}.

Since  the resolution of the Kervaire invariant problem
by Hill, Hopkins, and Ravenel in \cite{HHRKervaire}, in which calculations in Bredon's ordinary equivariant cohomology played a prominent role,
there has been renewed interest in the subject.
Most published computations such as
\cite{Dug:AHSSforKR}, \cite{DuggerGrass}, \cite{HazelFundamental}, \cite{HazelSurfaces}
\cite{Hogle}, \cite{HoglePoincarepoly}, and \cite{KronholmSerre}, concentrate on the $RO(G)$-graded theory.
However, the $RO(G)$-graded viewpoint may be too narrow
and we have found that enlarging the grading leads to simpler, more natural presentations.
Previous work using this approach includes
\cite{Co:InfinitePublished}, \cite{CHTFiniteProjSpace},
\cite{CH:bt2}, and \cite{CH:bu2},
as well as \cite{CHTAlgebraic} and \cite{CH:geometric}, which use the extended grading
to obtain two different equivariant refinements of the classical  B\'ezout theorem.
As a concrete example in the present case, in dealing with quadrics when $p$ is odd, none of the generating elements $m_s$ lives in the
$RO(\GG)$ grading and, if $p$ is even, only one does, $m_{p/2}$.
This makes describing the structure of just the $RO(\GG)$-graded part much more
complicated than the statement of the theorem above.
We illustrate this in Section~\ref{sec:main theorem}, after proving Theorem~\ref{thm:intro}.

One remarkable feature of equivariant ordinary cohomology is that it captures in a single functor information about both the non-equivariant cohomology of the given space and the cohomology of its fixed sets. But this also makes it difficult to compute, with the level of the challenge increasing hand in hand with the topological complexity of the fixed sets. For example, an increase in the number of connected components leads to an enlarging of the grading one has to consider, even when those components are simply connected.

This partially explains our decision to consider quadrics before finite Grassmannians, perhaps a more obvious target after considering finite projective spaces in \cite{CHTFiniteProjSpace}, particularly from the point of view of algebraic topology. The set of fixed points of a Grassmannian $\Grpq dpq$ with $d\leq p,q$  has $d+1$ connected components, which translates into a grading group isomorphic to $\Z^{d+2}$. On the other hand, most quadrics have fixed sets with two components, as do $BU(1)$ and (most) finite projective spaces, and are therefore graded on the same $\Z^3$ used for projective spaces. The only exceptions are those quadrics that, for dimensional reasons, have some zero-dimensional fixed-set components; they can have fixed sets with up to four components and will be treated separately in \cite{CH:QuadricsIII}.

The difference in the geometries of the fixed sets is also the reason we have separated the antisymmetric and the symmetric cases, the latter of which will be treated in \cite{CH:QuadricsII}. 
 While in the antisymmetric case every  quadric contains the  fixed sets of its ambient projective space, namely two disjoint copies of $\Xp p$, symmetric quadrics intersect the fixed projective spaces in smaller quadrics. 
Since the parities of the dimensions of the fixed projective spaces in which they live vary, we can see the polynomials (\ref{evenquadric}) and (\ref{oddquadric}) both occur, leading to four different cases that need to be considered.

Another reason to be interested in quadrics is that,
from the perspective of algebraic geometry and more specifically Schubert calculus, they are natural objects to consider and usually appear quite high on the to-do list of computations to be performed when a new functor---in our case equivariant ordinary cohomology---is introduced. If, on the one hand, they are the simplest examples of algebraic hypersurfaces after projective spaces, they also play a key role in Schubert calculus. In fact, for every classical group it is possible to define analogues of the classical Grassmannians and, more generally, flag varieties by interpreting them as quotients of $\GL(n,\C)$ by parabolic subgroups. In the case of the orthogonal groups $O(n,\C)$, the corresponding quotients give quadrics and quadric bundles in place of projective spaces and projective bundles.

As a sample application of our calculations, we will use them to refine the famous classical result in enumerative geometry
that there are exactly 27 lines on a smooth cubic hypersurface $H$ in $\Xp 4$. The cohomological translation of this counting problem requires a computation in the cohomology of $\Grp 24$ and, thanks to the fact that this Grassmannian happens to be a quadric in $\Xp 6$, in Section~\ref{sec:27 lines} we are able to use our calculations to answer the question,
\begin{equation}\label{27 lines question}
    \text{``How does $\GG$ act on the 27 lines if $H$ is contained in $\Xpq{3}{}$?''}
\end{equation}
On the left in Figure~\ref{fig:lines}, we show the three ways in which $\GG$ can act on an affine plane,
and the induced actions on a projective line. 
In the top part of the figure, showing the real part of the  affine planes, blue indicates trivial $\GG$-action while red indicates action by $-1$;
the resulting projective lines are shown in the bottom part of the figure.
Thought of as points in a Grassmannian $\Grpq 2pq$,
these actions are distinguished by the fact that the corresponding points lie in different components of
the fixed set $\Grpq 2pq^\GG$, and our cohomology computation will be able to make this
distinction as well. Hence, for those lines in the cubic $H$ that are $\GG$-invariant, we will be able
to tell exactly how $\GG$ acts on them.
The other possibility is that some of the lines may not be $\GG$-invariant, but then $\GG$ must
take such a line to another line in $H$. These lines thus pair up and form
free $\GG$-orbits in the Grassmannian. Since the Grassmannian is nonequivariantly connected,
any two maps of the free orbit $\GG/e$ into the Grassmannian are $\GG$-homotopic, so cohomology
cannot make any distinction between them, and they all appear generically like the free action
shown on the right in Figure~\ref{fig:lines}.

Our result recovers a calculation done by Brazelton in \cite{Braz:equivenumerative} that described the action of $\GG$ on the set of 27 lines, but refines that calculation by distinguishing the types of lines that appear;
we refer the reader to Section~\ref{sec:27 lines} for an in depth comparison of our approach with Brazelton's.
We will consider the same question for a cubic in $\Xpq 22$ in \cite{CH:QuadricsIII}.

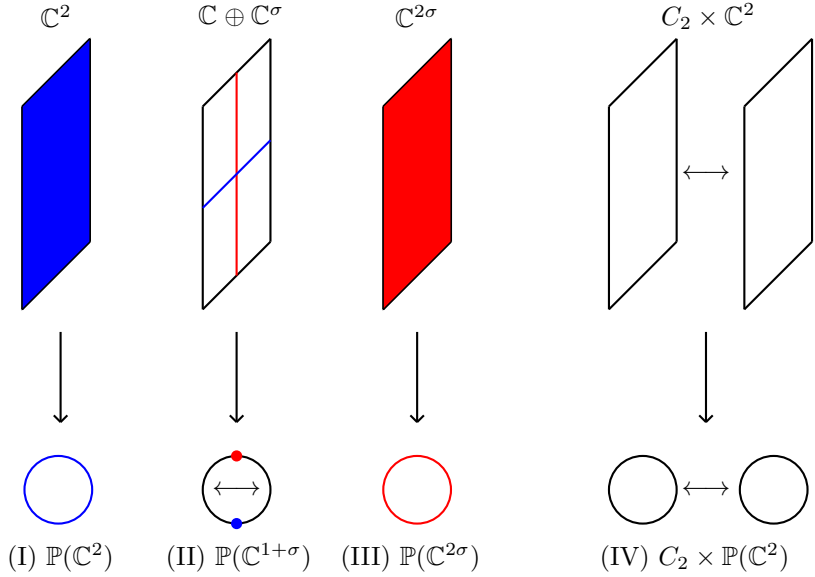
\begin{figure*}\label{fig:lines}
\phantom{a}\hspace{-2 cm}
\begin{tikzpicture}[scale=0.3]
\draw[thick] (-10,7) -- (-7,10);
\draw[thick] (-10,-2) -- (-7,1);
\draw[thick] (-10,7) -- (-10,-2);
\draw[thick] (-7,10) -- (-7,1); 
\draw[fill=blue] (-10,7) -- (-7,10) -- (-7,1) -- (-10,-2) --(-10,7);

\draw[thick] (-8.3,-3) -- (-8.3,-7);
\draw[thick] (-8.6,-6.7) -- (-8.3,-7);
\draw[thick] (-8,-6.7) -- (-8.3,-7);

\draw[thick, blue] (-8.4,-10) circle (1.5);

\draw[thick] (-2,7) -- (1,10);
\draw[thick] (-2,-2) -- (1,1);
\draw[thick] (-2,7) -- (-2,-2);
\draw[thick] (1,10) -- (1,1);
\draw[thick,red]  (-0.5,8.5) -- (-0.5,-0.5);
\draw[thick,blue] (-2,2.5) -- (1,5.5); 

\draw[thick] (-0.5,-3) -- (-0.5,-7);
\draw[thick] (-0.8,-6.7) -- (-0.5,-7);
\draw[thick] (-0.2,-6.7) -- (-0.5,-7);

 \draw[thick] (-0.5,-10) circle (1.5);
\draw[thick,blue,fill=blue] (-0.5,-11.5) circle (0.2);
\draw[thick,red, fill=red] (-0.5,-8.5) circle (0.2);

\draw[thick] (6,7) -- (9,10);
\draw[thick] (6,-2) -- (9,1);
\draw[thick] (6,7) -- (6,-2);
\draw[thick] (9,10) -- (9,1);
\draw[fill=red] (6,7) -- (9,10) -- (9,1) -- (6,-2) --(6,7); 

\draw[thick] (7.5,-3) -- (7.5,-7);
\draw[thick] (7.2,-6.7) -- (7.5,-7);
\draw[thick] (7.8,-6.7) -- (7.5,-7);

\draw[thick,red] (7.5,-10) circle (1.5);

\draw[thick] (16,7) -- (19,10);
\draw[thick] (16,-2) -- (19,1);
\draw[thick] (16,7) -- (16,-2);
\draw[thick] (19,10) -- (19,1);

\draw[thick] (22,7) -- (25,10);
\draw[thick] (22,-2) -- (25,1);
\draw[thick] (22,7) -- (22,-2);
\draw[thick] (25,10) -- (25,1);

\draw[thick] (20.3,-3) -- (20.3,-7);
\draw[thick] (20,-6.7) -- (20.3,-7);
\draw[thick] (20.6,-6.7) -- (20.3,-7);

\draw[thick, ] (17.5,-10) circle (1.5);

\draw[thick] (23.3,-10) circle (1.5);


 \node at (-8.5,11) {$ \Cp2$};
\node at (-8.3,-13) {(I)$\  \Xp2$};

\node at (-0.5,-10) {$\longleftrightarrow$};
\node at (-0.3,11) {$ \Cp{}\oplus\Cq{}$};
\node at (-0.4,-13) {(II)$\  \Xpq{1}{}$};
  
\node at (7.5,11) {$\Cq2$};
\node at (7.2,-13) {(III)$\ \Xq{2}$};
 
\node at (20.4,11) {$\GG\times\Cp2$}; 
\node at (20.3,4) {$\longleftrightarrow$};
\node at (20.3,-10) {$\longleftrightarrow$};
\node at (19.8,-13) {(IV)$\ \GG\times\Xp2$};    


\end{tikzpicture}
\caption{$\GG$-affine planes and their associated projective lines}\label{fig:lines}
\end{figure*}

The structure of the remainder of the paper is as follows.
We restate and prove Theorem~\ref{thm:intro} in 
Section~\ref{sec:main theorem}, as Theorem~\ref{thm:xo2 mutliplicative},
by first finding splittings that allow us to determine the additive structure,
and then finding sufficient relations to determine the multiplicative structure.
In Section~\ref{sec:restrictions}, we investigate two restrictions maps,
the restriction to nonequivariant cohomology and the fixed-point map.
Besides helping to explicate the connection between the equivariant result and the classical one,
these restriction maps are useful in calculations, including those that follow in subsequent sections.
One of the quadrics we study here, $\xQp{6}$, coincides with the Grassmannian $\Grpq 23{}$,
and we spell out the connection in more detail in Section~\ref{sec:grassmannian}.
In Section~\ref{sec:27 lines}, we use the resulting calculation of the cohomology of this Grassmannian
to answer question (\ref{27 lines question}) above about the 27 lines on a cubic in $\Xpq 3{}$.
We also include an Appendix in which we summarize results about the
equivariant ordinary cohomology theory we use, earlier results on the cohomology
of equivariant projective spaces, and the use of equivariant singular manifolds
to represent cohomology classes.

\subsection*{Acknowledgements}
Both authors would like to thank Sean Tilson for his help
in laying the foundations of this work. The first author thanks Hofstra University for
a research grant providing financial support for this project.
The second author was partially supported by the National Research Foundation of Korea (NRF) grant funded by the Korea government (MSIT) (No. RS-2024-00414849).

\section{The cohomology of antisymmetric quadrics}\label{sec:main theorem}

We now turn to our main task, the computation of the cohomology of $\xQp{2p} = \xQexp{p}$.
Via the inclusions $\xQexp{p}\includesin \Xpq pp \includesin BU(1)$,
we may consider $\HH^\gr(\xQp{2p})$ as an algebra over $\HH^\gr(BU(1))$,
and $\HH^\gr(\xQp{2p})$ contains restrictions of the generators $\zeta_0$, $\zeta_1$, $\cwd$, and $\cxwd$,
which we will call by those same names.

We mentioned earlier that the fixed sets are projective spaces:
\[
    \xQp{2p}^\GG = \Xp p \disjunion \Xq p.
\]
These come from two maximal $\GG$-invariant isotropic subspaces,
and are special cases of the following definition that highlights some other such subspaces.
In this definition we use the notation
\[
    \vec x = (x_1,x_2,\cdots,x_s)
\]
for a vector, and the notation $\vec 0_k$ indicates a
vector of $k$ zeros.

\begin{definition}
For $s\in\{0,\dots, p\}$, let
\[
    i_{s}, j_{s}\colon \Xpq s{(p-s)} \to \xQp{2p}
\]
be defined by
\begin{align*}
    i_{s}[\vec x:\vec y] &= [\vec 0_{p-s}:\vec x:\vec 0_{s}:\vec y] \\
\intertext{and}
    j_{s}[\vec x:\vec y] &= [\vec x:\vec 0_{p-s}:\vec y:\vec 0_{s}].
\end{align*}
\end{definition}

\begin{proposition}\label{prop:xo2 cofibration}
For $s\in\{0,\dots, p\}$ we have a cofibration
\[
    \Xpq s{(p-s)}_+ \xrightarrow{i_{s}} (\xQp{2p})_+ \to \susp^\nu j_{p-s}(\Xpq {(p-s)}s)_+
\]
where 
\[
    \nu \iso s\omega\dual \dirsum (p-s)\chi\omega\dual - \chi O(2)    
\]
is the normal bundle to the inclusion $j_{p-s}$.
Therefore, we have a long exact sequence
\[
    \cdots\to \HH^{\alpha-\nu}(\Xpq {(p-s)}s) \xrightarrow{(j_{p-s})_!}
    \HH^{\alpha}(\xQp{2p}) \xrightarrow{i_{s}^*}
    \HH^{\alpha}(\Xpq s{(p-s)}) \to\cdots .
\]
\end{proposition}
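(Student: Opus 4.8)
The plan is to realize the long exact sequence as the cohomology sequence of a cofiber sequence built from a closed $\GG$-submanifold and its open complement. Write $Z := j_{p-s}(\Xpq{(p-s)}{s}) = \PP(M')$ and $A := i_s(\Xpq{s}{(p-s)}) = \PP(M'')$, where $M'$ and $M''$ are the $\GG$-invariant maximal isotropic subspaces of $\Cpq pp$ spanned by the coordinates used to define $j_{p-s}$ and $i_s$. A direct inspection of the coordinate patterns shows that these are complementary, $M'\dirsum M'' = \Cpq pp$, with $M'\iso\Cpq{(p-s)}{s}$ and $M''\iso\Cpq{s}{(p-s)}$ as representations. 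I would first establish the equivariant cofiber sequence
\[
    A_+ \xrightarrow{\;i_s\;} (\xQp{2p})_+ \to (\xQp{2p})/(\xQp{2p}\setminus Z),
\]
and then identify the cofiber with the Thom space $\susp^\nu Z_+$ and apply a Thom isomorphism to obtain the stated sequence.

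For the first map, the geometric input is that the open complement $\xQp{2p}\setminus Z$ equivariantly deformation retracts onto $A$. This is witnessed by an explicit linear flow: writing a point of $\xQp{2p}\setminus Z$ as $[v'+v'']$ with $v'\in M'$, $v''\in M''$ and $v''\neq 0$, set $H_t[v'+v''] = [t v' + v'']$ for $t\in[0,1]$. Since $M'$ and $M''$ are isotropic, the symmetric bilinear form $B$ with $f(v)=\tfrac12 B(v,v)$ satisfies $f(v'+v'') = B(v',v'')$, so $f(tv'+v'') = tB(v',v'') = 0$ and the path stays in the quadric; the flow is $\GG$-equivariant because $M'$ and $M''$ are $\GG$-invariant and scaling by real $t$ commutes with the action. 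At $t=0$ we land in $\PP(M'') = A$, so $H$ is the desired retraction and, under the induced equivalence, the inclusion of the complement into $(\xQp{2p})_+$ becomes $i_s$.

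Next I would invoke the equivariant tubular neighborhood theorem for the closed $\GG$-submanifold $Z\subset\xQp{2p}$, giving the homeomorphism $(\xQp{2p})/(\xQp{2p}\setminus Z)\homeo\susp^\nu Z_+$ with $\nu = N_{Z/\xQp{2p}}$. To compute $\nu$, I would use the normal bundle sequence for $Z\subset\xQp{2p}\subset\Xpq pp$. The quadric is the zero locus of the invariant section $f$ of $\chi O(2)$, so its normal bundle in $\Xpq pp$ is $\chi O(2)$ restricted, while the normal bundle of the linear subspace $Z=\PP(M')$ is $\omega\dual\tensor(\Cpq pp/M')\iso\omega\dual\tensor\Cpq{s}{(p-s)}\iso s\omega\dual\dirsum(p-s)\chi\omega\dual$. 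The resulting short exact sequence
\[
    0 \to \nu \to s\omega\dual\dirsum(p-s)\chi\omega\dual \to \chi O(2)|_Z \to 0
\]
splits after averaging a $\GG$-invariant Hermitian metric, yielding the stable identification $\nu\iso s\omega\dual\dirsum(p-s)\chi\omega\dual-\chi O(2)$ of the statement.

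Finally, applying $\HH^\gr$ to the cofiber sequence produces the long exact sequence, and the Thom isomorphism $\HH^\alpha(\susp^\nu Z_+)\iso\HH^{\alpha-\nu}(Z)$ rewrites the cohomology of the cofiber as $\HH^{\alpha-\nu}(\Xpq{(p-s)}{s})$, with the connecting map becoming the Gysin pushforward $(j_{p-s})_!$ and the other map becoming $i_s^*$. I expect the main obstacle to be exactly this last step: one must know that the Thom isomorphism is available in the extended grading for the virtual bundle $\nu$, i.e. that the complex bundles $\omega\dual$, $\chi\omega\dual$, and $\chi O(2)$ are orientable in the sense required by the theory and carry Thom classes compatible with the splitting above, so that the grading shift by the \emph{virtual} class $\nu$ is legitimate. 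This is where the extended grading does real work; I would lean on the Euler- and Thom-class formalism for these bundles recorded in the Appendix and in \cite{Co:InfinitePublished,CHTFiniteProjSpace}. Granting it, both the grading shift and the identification of the connecting homomorphism with $(j_{p-s})_!$ are formal.
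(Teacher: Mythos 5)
Your proposal is correct, and its overall skeleton (cofibration, Thom space, Thom isomorphism) is forced by the statement, but the geometric mechanism you use differs from the paper's. You delete the closed submanifold $Z = j_{p-s}(\Xpq{(p-s)}{s})$, retract its complement onto $A = i_s(\Xpq{s}{(p-s)})$ via the linear homotopy $[v'+v'']\mapsto [tv'+v'']$, and then need the equivariant tubular neighborhood theorem plus excision to identify the cofiber with $\susp^\nu Z_+$, together with a separate computation of $\nu$ from the conormal sequence of $Z\subset \xQp{2p}\subset \Xpq{p}{p}$ and an equivariant splitting. The paper instead deletes $A$ and observes that the explicit algebraic projection $\pi\colon \xQp{2p}\setminus A\to Z$ (zero out the $M''$-coordinates) \emph{is} the normal bundle projection of $Z$: the fiber over $[v']$ is $\{v''\in M''\mid B(v',v'')=0\}$ twisted by $\omega\dual$, which is exactly where the $-\chi O(2)$ in the formula comes from. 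Thus $\xQp{2p}/A$ is literally the one-point compactification of the total space of $\nu$, i.e.\ the Thom space, and no tubular neighborhood theorem, deformation retraction, or splitting argument is needed. Your route buys transparency---the conormal sequence makes the appearance of $\chi O(2)$ conceptual and uses only general machinery---at the cost of invoking more of it; the paper's buys a one-line proof and an honest homeomorphism rather than a chain of homotopy equivalences. Finally, your worry about the legitimacy of the Thom isomorphism for the virtual class $\nu$ is not a genuine obstacle: supplying Thom isomorphisms for arbitrary equivariant vector bundles is precisely what the $RO(\Pi X)$-grading is designed to do (as recalled in the Appendix), so that last step is as formal as you hoped.
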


\begin{proof}
There is a projection
\[
    \pi\colon \xQp{2p} \setminus i_{s}(\Xpq s{(p-s)}) \to j_{p-s}(\Xpq {(p-s)}s)
\]
given by
\[
    \pi[\vec x:\vec y] = [x_{1}:\cdots:x_{p-s}:\vec 0_{s}:y_{p}:\cdots:y_{p-s+1}:\vec 0_{p-s}],
\]
which we can identify with the projection of the normal bundle to $j_{p-s}$.
The result follows.
\end{proof}

Note that, as a grading,
\[
    \nu = s\omega + (p-s)\chi\omega - 2\sigma.
\]

We will use the preceding result to calculate the cohomology of $\xQp{2p}$ by showing that each of these
cofibrations gives a short exact sequence in a range of gradings.
To state the result, we introduce the following notation.

\begin{definition}\label{def:rs m}
For $n\in\Z$, let
\[
    s_n =
    \begin{cases}
        p & \text{if $n\geq p$} \\
        \lfloor (p+n)/2 \rfloor
        & \text{if $-p < n < p$} \\
        0 & \text{if $n\leq -p$.}
    \end{cases}
\]
\end{definition}

\begin{lemma}\label{lem:basis}
Let $n\in\ZZ$. Then the restriction map
\[
    \HH^{n\Omega_1+RO(\GG)}(BU(1)) \to \HH^{n\Omega_1+RO(\GG)}(\Xpq {s_n}{(p-s_n)})
\]
is surjective.
\end{lemma}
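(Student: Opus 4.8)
The plan is to read the statement off from the explicit computations of the two cohomologies involved: $\HH^\gr(BU(1)) = \mathcal{R}^\diamond$ as determined in \cite{Co:InfinitePublished} (and recalled in the Appendix), and $\HH^\gr(\Xpq{s}{(p-s)})$ as determined in \cite{CHTFiniteProjSpace}. Since the inclusion $\Xpq{s}{(p-s)} \includesin BU(1)$ is a map over $BU(1)$ and the induced restriction is a map of $\mathcal{R}^\diamond$-algebras, the image of restriction is exactly the subalgebra of the target generated by the restricted classes $\cwd$, $\cxwd$, $\zeta_0$, and $\zeta_1$; write $E \subset \HH^\gr(\Xpq{s}{(p-s)})$ for this Euler subalgebra. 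The lemma is then equivalent to the assertion that, for $s = s_n$, the entire slice $\HH^{n\Omega_1 + RO(\GG)}(\Xpq{s_n}{(p-s_n)})$ is contained in $E$.

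To make this tractable I would first record how the grading generator $\Omega_1$ acts on the Euler classes. Measuring $\Omega_1$ by the normalized difference of $\sigma$-dimensions over the two fixed components $\Xp{s}$ and $\Xq{(p-s)}$, one checks directly from the identifications $\omega|_{\Xp{s}} \iso \underline{\C}$ and $\omega|_{\Xq{(p-s)}} \iso \Cq{}$ that $\cwd$ has $\Omega_1$-degree $-1$ and $\cxwd$ has $\Omega_1$-degree $+1$, while $RO(\GG)$ sits in $\Omega_1$-degree $0$ and $\zeta_0,\zeta_1$ shift $\Omega_1$ only by even amounts. Consequently the part of $E$ in the slice $n\Omega_1 + RO(\GG)$ is generated by the Euler monomials $\cwd[a]\cxwd[b]$ with $b-a$ of the same parity as $n$ (corrected by powers of $\zeta_0,\zeta_1$ to land in the slice), whereas the cells of $\Xpq{s}{(p-s)}$ that are \emph{not} reached from $BU(1)$ are precisely the top Poincar\'e-dual cells attached at the two fixed ends of the space, which I would locate in specific $\Omega_1$-degrees using the module structure of \cite{CHTFiniteProjSpace}.

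The crux is then a centering computation. Using the Euler sequence $T\Xpq{s}{(p-s)} \dirsum \underline\C \iso (\C^s\dirsum\Cq{(p-s)})\tensor\omega\dual$, the tangent representation has $\Omega_1$-degree $p - 2s$, so Poincar\'e duality places the obstructing top cells in a bounded window of $\Omega_1$-degrees centered near $p - 2s$. The choice $s_n = \lfloor (p+n)/2 \rfloor$ is exactly the one that moves this window off the slice being tested: it forces the obstructing degrees to the opposite side of $n$ (with the floor absorbing the parity of $p+n$), so that every class in slice $n$ is an interior cell and hence lies in $E$. I would finish by checking the two boundary regimes of Definition~\ref{def:rs m} directly — for $n \geq p$ the target is the purely trivial space $\Xp{p}$ and for $n \leq -p$ it is $\Xq{p}$, in each of which the tested slice sits entirely in the positive cone. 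The main obstacle I anticipate is not the geometry but the bookkeeping: pinning down the exact $\Omega_1$-degrees of the obstructing top cells from the presentation in \cite{CHTFiniteProjSpace}, and verifying that the floor in $s_n$ threads the boundary between reachable and obstructing cells precisely, integrally over the Mackey-functor coefficients rather than merely after passing to underlying and fixed data.
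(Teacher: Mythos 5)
Your opening reduction is sound and matches the paper's set-up: since restriction is a map of algebras over $\HS$, its image is the subalgebra of $\HH^\gr(\Xpq{s_n}{(p-s_n)})$ generated by $\zeta_0$, $\zeta_1$, $\cwd$, $\cxwd$, and the lemma amounts to showing that the slice $n\Omega_1+RO(\GG)$ is spanned by polynomial monomials in these classes. But the mechanism you propose for verifying this contains concrete errors. First, the degree bookkeeping is wrong: $\grad\zeta_1 = \Omega_1$ and $\grad\zeta_0 = \Omega_0 = 2\sigma - 2 - \Omega_1$, so $\zeta_1$ and $\zeta_0$ shift the $\Omega_1$-coordinate by $+1$ and $-1$ respectively, not by even amounts. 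Consequently there is no parity constraint on which monomials $\cwd[a]\cxwd[b]$ can be corrected into slice $n$; indeed the paper's bases mix, e.g., $\cwd[n]$ and $\zeta_0\cwd[n+1]$ within a single slice. Second, the obstruction to surjectivity is mischaracterized. What fails to be hit are not ``top Poincar\'e-dual cells in a bounded window'': by Theorem A of the projective-space paper, $\HH^\gr(\Xpq{s}{(p-s)})$ is generated over $\HH^\gr(BU(1))$ by the infinitely divisible families $\zeta_0^{-k}\cwd[s]$ and $\zeta_1^{-k}\cxwd[p-s]$, $k\geq 1$, which sit in slices $s+k$ and $(s-p)-k$ respectively. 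These occupy an \emph{unbounded} set of slices on \emph{both} sides of a bounded good window (roughly $s-p \leq n \leq s$); the role of $s_n = \lfloor(p+n)/2\rfloor$ is to place $n$ inside that window, not to push a bounded set of bad degrees ``to the opposite side of $n$.'' Your tangent-bundle/excess-of-duality heuristic, besides carrying the same sign confusion, cannot produce the one-sided bounded picture you describe because no such picture holds.

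Third, and decisively, the step you defer as ``bookkeeping'' is the entire proof. The paper's argument is exactly that bookkeeping: it writes down, in each of the four regimes of Definition~\ref{def:rs m}, the explicit $\HS$-basis of $\HH^{n\Omega_1+RO(\GG)}(\Xpq{s_n}{(p-s_n)})$ supplied by the projective-space calculation, and observes that for this particular pairing of $n$ with $s_n$ every basis element is a monomial in $\zeta_0$, $\zeta_1$, $\cwd$, $\cxwd$ with nonnegative exponents, hence in the image of restriction. Your proposal replaces this verification with a degree-counting argument whose asserted inputs are false, and then acknowledges that the true inputs would have to be extracted from the presentation in the projective-space paper anyway. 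As written, the proposal does not establish the lemma; to repair it you would drop the parity and window claims and simply quote (or re-derive) the slice-by-slice bases, which is what the paper does.
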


\begin{proof}
For notational simplicity, we write $s$ for $s_n$ throughout this proof.

The result follows from the basis for $\HH^\gr(\Xpq s{(p-s)})$ given in \cite{CHTFiniteProjSpace}.
In general, the restriction map of the lemma is not surjective because of the presence in
the cohomology of $\Xpq s{(p-s)}$ of elements like $\zeta_0^{-1}\cwd[s]$ that are not present
in the cohomology of $BU(1)$. However, in the indicated gradings, the bases
involve no such monomials.
For those dedicated readers who want to see the details, these are the bases in question:
\begin{itemize}
    \item If $n \leq -p$, then a basis for $\HH^{n\Omega_1 + RO(\GG)}(\Xq p)$ is
    \[
        \{ \zeta_0^{|n|}, \zeta_0^{|n|-1}\cxwd, \ldots, \zeta_0^{|n|-p+1}\cxwd[p-1] \}.
    \]

    \item If $-p < n < p$ and $p-n$ is even, then $s = (p+n)/2$ and a basis for
    $\HH^{n\Omega_1 + RO(\GG)}(\Xpq s{(p-s)})$ when $n < 0$ is
    \[
        \{ \zeta_0^{|n|}, \zeta_0^{|n|-1}\cxwd, \ldots, \cxwd[|n|], \zeta_0\cwd\cxwd[|n|], \cwd\cxwd[|n|+1],
            \zeta_0\cwd[2]\cxwd[|n|+1], \ldots, \zeta_0\cwd[s]\cxwd[p-s-1] \}
    \]
    and, when $n\geq 0$, is
    \[
        \{ \zeta_1^n, \zeta_1^{n-1}\cwd, \ldots, \cwd[n], \zeta_0\cwd[n+1], \cwd[n+1]\cxwd, \zeta_0\cwd[n+2]\cxwd,
            \ldots, \zeta_0\cwd[s]\cxwd[p-s-1] \}.
    \]

    \item If $-p < n < p$ and $p-n$ is odd, then $s =(p+n-1)/2$ and a basis for
    $\HH^{n\Omega_1 + RO(\GG)}(\Xpq s{(p-s)})$ when $n<0$ is
    \[
        \{ \zeta_0^{|n|}, \zeta_0^{|n|-1}\cxwd, \ldots, \cxwd[|n|], \zeta_0\cwd\cxwd[|n|], \cwd\cxwd[|n|+1],
            \zeta_0\cwd[2]\cxwd[|n|+1], \ldots, \cwd[s]\cxwd[p-s-1] \}
    \]
    and, when $n\geq 0$, is
    \[
        \{ \zeta_1^n, \zeta_1^{n-1}\cwd, \ldots, \cwd[n], \zeta_0\cwd[n+1], \cwd[n+1]\cxwd, \zeta_0\cwd[n+2]\cxwd,
            \ldots, \cwd[s]\cxwd[p-s-1] \}.
    \]

    \item If $n \geq p$, then a basis for $\HH^{n\Omega_1 + RO(\GG)}(\Xp p)$ is
    \[
        \{ \zeta_1^n, \zeta_1^{n-1}\cwd, \ldots, \zeta_1^{n-p+1}\cwd[p-1] \}.
    \]
\end{itemize}
All of these elements are in the image of restriction from $\HH^\gr(BU(1))$.
\end{proof}

\begin{proposition}\label{prop:xo2 splitting}
Let $n\in\Z$, let $\alpha\in n\Omega_1+RO(\GG)$,
and let $s = s_n$. Then we have a split short exact sequence
\[
   0\to \HH^{\alpha-\nu}(\Xpq {(p-s)}s) \xrightarrow{(j_{p-s})_!}
    \HH^{\alpha}(\xQp{2p}) 
    \xrightarrow{i_s^*}
    \HH^{\alpha}(\Xpq s{(p-s)}) \to 0
\]
where $\nu = s\omega + (p-s)\chi\omega - 2\sigma$.
\end{proposition}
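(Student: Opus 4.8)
The plan is to obtain the short exact sequence by showing that the long exact sequence of Proposition~\ref{prop:xo2 cofibration} degenerates in the relevant gradings, and this will reduce to a single surjectivity statement. Writing the portion of that sequence around $\alpha$ as
\[
    \cdots \to \HH^{\alpha-1}(\Xpq s{(p-s)}) \xrightarrow{\partial}
    \HH^{\alpha-\nu}(\Xpq {(p-s)}s) \xrightarrow{(j_{p-s})_!}
    \HH^{\alpha}(\xQp{2p}) \xrightarrow{i_{s}^*}
    \HH^{\alpha}(\Xpq s{(p-s)}) \xrightarrow{\partial} \cdots,
\]
I note that $\alpha$ and $\alpha-1$ differ by the trivial representation and hence both lie in the slice $n\Omega_1+RO(\GG)$ governed by the same value $s=s_n$. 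Thus it suffices to prove that $i_s^*$ is surjective on the whole slice $n\Omega_1+RO(\GG)$. Indeed, surjectivity at $\alpha$ gives the zero on the right, while by exactness of the sequence at $\HH^{\alpha-1}(\Xpq s{(p-s)})$, surjectivity of $i_s^*$ at $\alpha-1$ forces $\partial\colon\HH^{\alpha-1}(\Xpq s{(p-s)})\to\HH^{\alpha-\nu}(\Xpq{(p-s)}s)$ to vanish, which is exactly the injectivity of $(j_{p-s})_!$.

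To prove surjectivity of $i_s^*$ I would factor it through the restriction from $BU(1)$. The image of $i_s$ is the locus in which only the $s$ trivial coordinates $x_1,\dots,x_s$ and the $p-s$ sign coordinates $y_1,\dots,y_{p-s}$ are nonzero, so the composite
\[
    \Xpq s{(p-s)} \xrightarrow{i_s} \xQp{2p} \includesin \Xpq pp \includesin BU(1)
\]
classifies the tautological line bundle of $\Xpq s{(p-s)}$ and is therefore the standard classifying inclusion. On cohomology this exhibits the restriction map of Lemma~\ref{lem:basis} as the composite $\HH^\gr(BU(1))\to\HH^\gr(\xQp{2p})\xrightarrow{i_s^*}\HH^\gr(\Xpq s{(p-s)})$. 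Since Lemma~\ref{lem:basis} says the total composite is surjective on $n\Omega_1+RO(\GG)$, its final factor $i_s^*$ is surjective there as well.

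It then remains to split the resulting short exact sequence. For this I would again use the factorization through $BU(1)$: Lemma~\ref{lem:basis} exhibits a basis of $\HH^{n\Omega_1+RO(\GG)}(\Xpq s{(p-s)})$ each of whose elements is the restriction of a monomial in $\cwd,\cxwd,\zeta_0,\zeta_1$ from $\HH^\gr(BU(1))$. Restricting those monomials to $\xQp{2p}$ instead produces elements that $i_s^*$ carries back to the given basis; since $i_s^*$ is a map of modules over the coefficient ring and these restricted monomials map to a basis, extending linearly defines a section of $i_s^*$ and hence the desired splitting.

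The main obstacle is the surjectivity step, and more precisely the fact that it holds only because $s$ is chosen to be $s_n$: the restriction from $BU(1)$ is generally not surjective---classes such as $\zeta_0^{-1}\cwd[s]$ lie in the cohomology of $\Xpq s{(p-s)}$ but not in that of $BU(1)$---and the content of Lemma~\ref{lem:basis} is precisely that within the slice $n\Omega_1+RO(\GG)$ the value $s=s_n$ avoids these obstructing monomials. Once this is granted, the remainder is formal manipulation of the long exact sequence, turning on the observation that shifting $\alpha$ by the trivial representation leaves us in the same slice and hence with the same $s$.
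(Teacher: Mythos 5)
Your proof is correct and takes essentially the same route as the paper's: surjectivity of $i_s^*$ is deduced from Lemma~\ref{lem:basis} via the factorization of the restriction map through $BU(1)$, and the splitting comes from the freeness of $\HH^\gr(\Xpq s{(p-s)})$ over $\HS$, which is precisely what justifies extending your section $\HS$-linearly over the lifted basis. You simply spell out details the paper leaves implicit, notably that the long exact sequence of Proposition~\ref{prop:xo2 cofibration} degenerates because $\alpha-1$ lies in the same slice $n\Omega_1+RO(\GG)$, so surjectivity there kills the connecting map and gives injectivity of $(j_{p-s})_!$.
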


\begin{proof}
That $i_{s}^*$ is surjective follows from the preceding lemma.
The splitting occurs because we know from
\cite{CHTFiniteProjSpace} that $\HH^\gr(\Xpq s{(p-s)})$ is a free module over $\HS$.
\end{proof}

This gives the additive structure of the cohomology, which we can write as follows.

\begin{corollary}\label{cor:xo2 additive}
For $n\in\Z$ and $s = s_n$, we have
\begin{multline*}
    \HH^{n\Omega_1+RO(\GG)}(\xQp{2p}) \iso
    \HH^{n\Omega_1+RO(\GG)}(\Xpq s{(p-s)}) \\
    \dirsum 
    \susp^{\nu}\HH^{n\Omega_1 - \nu + RO(\GG)}(\Xpq {(p-s)}s)
\end{multline*}
where $\nu = s\omega + (p-s)\chi\omega - 2\sigma$.
\qed
\end{corollary}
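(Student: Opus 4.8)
The plan is to read the corollary off directly from Proposition~\ref{prop:xo2 splitting}; the content is not a new argument but the translation between the ``one grading at a time'' form of the split short exact sequence and the ``all of $n\Omega_1+RO(\GG)$ at once'' form of a direct-sum decomposition.

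First I would use that a \emph{split} short exact sequence identifies its middle term with the direct sum of the two outer terms. Applying this to the sequence of Proposition~\ref{prop:xo2 splitting} for a fixed $\alpha\in n\Omega_1+RO(\GG)$ gives
\[
    \HH^{\alpha}(\xQp{2p}) \iso \HH^{\alpha}(\Xpq s{(p-s)}) \dirsum \HH^{\alpha-\nu}(\Xpq {(p-s)}s),
\]
with $s=s_n$ and $\nu=s\omega+(p-s)\chi\omega-2\sigma$. The splitting there comes from the $\HS$-freeness of $\HH^\gr(\Xpq s{(p-s)})$ invoked in the proof of that proposition, so these decompositions are compatible as $\alpha$ varies and assemble into a single isomorphism of the $(n\Omega_1+RO(\GG))$-graded pieces.

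Second I would repackage the right-hand summand as a suspension. Writing $\susp^\nu$ for the algebraic grading-shift functor, so that $(\susp^\nu M)^\alpha = M^{\alpha-\nu}$, I observe that as $\alpha$ runs over the coset $n\Omega_1+RO(\GG)$ the shifted grading $\alpha-\nu$ runs over $n\Omega_1-\nu+RO(\GG)$, and placing each group $\HH^{\alpha-\nu}(\Xpq{(p-s)}s)$ back into degree $\alpha$ is by definition $\susp^\nu$. This turns the assembled second summand into $\susp^{\nu}\HH^{n\Omega_1-\nu+RO(\GG)}(\Xpq{(p-s)}s)$, exactly the form in the statement, and completes the isomorphism.

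There is no real obstacle here, only grading bookkeeping that one should confirm is consistent. Since $\omega$ and $\chi\omega$ carry $\Omega_1$-degrees $+1$ and $-1$ (as one reads off the bases in Lemma~\ref{lem:basis}, where $\cwd$ and $\zeta_1$ raise the $\Omega_1$-degree while $\cxwd$ and $\zeta_0$ lower it) and $\sigma\in RO(\GG)$, the element $\nu$ has $\Omega_1$-degree $2s-p$. Hence $n\Omega_1-\nu+RO(\GG)$ is nothing but the coset $(n+p-2s)\Omega_1+RO(\GG)$, so both summands sit on cosets of $RO(\GG)$ at the expected $\Omega_1$-level and the suspension $\susp^\nu$ lands the second one back in $n\Omega_1+RO(\GG)$. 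With that check the corollary is immediate, which is why it requires no separate proof.
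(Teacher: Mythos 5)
Your proposal is correct and matches the paper exactly: the paper states this corollary with no separate proof precisely because it is the immediate assembly of the split short exact sequences of Proposition~\ref{prop:xo2 splitting} over the coset $n\Omega_1+RO(\GG)$, with $\susp^\nu$ recording the grading shift. Your bookkeeping check that $\nu$ has $\Omega_1$-degree $2s-p$ (so the shifted coset is $(n+p-2s)\Omega_1+RO(\GG)$) agrees with the paper's own expansion $\nu=(2s-p)\Omega_1+2s+2(p-s-1)\sigma$.
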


To describe the basis implied by the corollary, we introduce the following elements.

\begin{definition}
For $s\in\{0,\dots,p\}$, let
\[
    m_s = (j_{p-s})_!(1) = [\Xpq {(p-s)}{s}]^* \in \HH^{\nu}(\xQp{2p})
\]
where, as a grading,
\begin{align*}
    \nu
    &= s\omega + (p-s)\chi\omega - 2\sigma \\
    &= (2s-p)\Omega_1 + 2s + 2(p-s-1)\sigma.
\end{align*}
\end{definition}

If we let $n = 2s-p$, then we can check from the definition that $s_n=s$.
Therefore, $m_s$ is indeed one of the basis elements
given by Proposition~\ref{prop:xo2 splitting}.
Since $2s-p$ has the same parity as $p$, 
these elements $m_s$ occur in every other coset $n\Omega_1 + RO(\GG)$
for $-p\leq n \leq p$.

With this notation, we can say that the basis implied by Corollary~\ref{cor:xo2 additive}
in the coset $n\Omega_1 + RO(\GG)$
can be written as a copy of the $p$ elements given by \cite{CHTFiniteProjSpace} as
forming a basis of $\HH^{n\Omega_1+RO(\GG)}(\Xpq {s_n}{(p-s_n)})$,
together with the elements we get by multiplying $m_{s_n}$ by each of the $p$ basis elements
of $\HH^{n\Omega_1 - \nu + RO(\GG)}(\Xpq {(p-s_n)}{s_n})$.

We now describe the multiplicative structure.

\begin{theorem}\label{thm:xo2 mutliplicative}
As an algebra over $\HH^\gr(BU(1))$, the ordinary cohomology $\HH^\gr(\xQp{2p})$ is generated
by the elements $m_s$ with $s\in\{0,\dots,p\}$,
subject to the facts that
$\cwd[p-s] m_s$ is infinitely divisible by $\zeta_0$ and $\cxwd[s] m_s$
is infinitely divisible by $\zeta_1$, and the relations
\begin{alignat*}{3}
   \text{(i)}&\qquad &  \cwd[s]\cxwd[p-s-1] &= \zeta_0 m_{s+1} + \zeta_1 m_{s} &\qquad&\text{for\ \  $0\leq s \leq p-1$;} \\
   \text{(ii)} &&  \cxwd m_{s+1} &= \cwd m_{s} &&\text{for\ \  $0 \leq s \leq p-1$;} \\
   \text{(iii)} && x_{s} x_{p-s} &= 0 &&\text{for\ \  $0\leq s \leq p$.}
\end{alignat*}

\end{theorem}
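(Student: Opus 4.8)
The plan is to take the additive description from Corollary~\ref{cor:xo2 additive} as the skeleton and upgrade it to a multiplicative presentation in three stages: first show the classes $m_s$ generate over $\HH^\gr(BU(1))$, then verify the divisibility statements and the three relations, and finally show that these relations are complete, i.e. that the induced map from the abstractly presented algebra is an isomorphism.

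For generation I would use the projection formula together with the splitting of Proposition~\ref{prop:xo2 splitting}. Since $m_s = (j_{p-s})_!(1)$, the projection formula gives $m_s\cdot\xi = (j_{p-s})_!(j_{p-s}^*\xi)$ for any $\xi\in\HH^\gr(BU(1))$, and $j_{p-s}^*$ sends $\cwd,\cxwd$ to the corresponding Euler classes on $\Xpq{(p-s)}{s}$. An analogue of Lemma~\ref{lem:basis} for $\Xpq{(p-s)}{s}$ in the shifted gradings $n\Omega_1-\nu+RO(\GG)$ (again read off from the basis in \cite{CHTFiniteProjSpace}) shows these monomials hit the required basis of the projective space, so every element of the pushforward summand in Corollary~\ref{cor:xo2 additive} is $m_{s_n}$ times a class pulled back from $BU(1)$. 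The image of $i_{s_n}^*$ is covered by $\HH^\gr(BU(1))$ via Lemma~\ref{lem:basis} itself. Hence the subalgebra generated by the $m_s$ meets every additive basis element and is everything.

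The divisibility assertions I would read off from the splitting: $\cwd[p-s]m_s$ and $\cxwd[s]m_s$ are pushforwards of $\cwd[p-s]$ and $\cxwd[s]$ on $\Xpq{(p-s)}{s}$, which (having exactly $p-s$ trivial and $s$ sign coordinates) are infinitely divisible by $\zeta_0$, resp. $\zeta_1$, there by \cite{CHTFiniteProjSpace}, and pushforward is $\HH^\gr(BU(1))$-linear. Relations (i) and (ii) I would verify by decomposing each side along the splitting: each side lies in a single coset $n\Omega_1+RO(\GG)$, its $i_{s_n}^*$-component is an Euler-class computation in a finite projective space, and its pushforward component is identified via the projection formula; matching the two components gives the identities. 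For relation (iii) I would argue geometrically that $m_s m_{p-s}=[\PP(M_s)]^*[\PP(M_{p-s})]^*$ is represented by an intersection that can be made empty (equivalently carries a trivialized excess bundle), so the class vanishes; alternatively this is a short check in the explicit basis.

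The crux, and the step I expect to be the main obstacle, is completeness: that (i)--(iii) together with the divisibility generate \emph{all} relations. I would phrase this as injectivity of the surjection $\Phi\colon\mathcal A\to\HH^\gr(\xQp{2p})$ out of the presented algebra, proved by producing a normal form. First, using (iii) and the chained consequence $\cwd[k]m_s=\cxwd[k]m_{s+k}$ of (ii), I would reduce every monomial in the $m_s$ to an $\HH^\gr(BU(1))$-combination of $1$ and single generators; then, using (i) together with the divisibility by $\zeta_0$ and $\zeta_1$, I would eliminate within each coset $n\Omega_1+RO(\GG)$ all $m_s$ except $m_{s_n}$. This exhibits $\mathcal A$ in each coset as spanned by at most the $2p$ classes predicted by Corollary~\ref{cor:xo2 additive}, and since $\Phi$ is onto and the target basis is independent, $\Phi$ is an isomorphism. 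The delicate points are checking that these reductions terminate in the correct normal form and that the divisibility hypotheses are invoked exactly where the divisions by $\zeta_0,\zeta_1$ occur, so that no rank is created or lost at the boundary gradings where the value of $s_n$ jumps.
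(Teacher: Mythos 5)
Your architecture is the same as the paper's: generation and divisibility read off from Proposition~\ref{prop:xo2 splitting} via the projection formula, then verification of the relations, then a completeness argument for a presented algebra $\mathcal A\to\HH^\gr(\xQp{2p})$. Where you diverge: for relations (i) and (ii) you take the algebraic route (decompose along the splitting, pin down coefficients componentwise); the paper explicitly notes this route exists but instead argues geometrically, representing $\cwd[s]\cxwd[p-s-1]$ by the singular manifold $j_{p-s-1}(\Xpq{(p-s-1)}{(s+1)})\union j_{p-s}(\Xpq{(p-s)}{s})$ whose two pieces give $\zeta_0 m_{s+1}$ and $\zeta_1 m_s$. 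For completeness, the paper does not count ranks: it fixes a coset $n\Omega_1+RO(\GG)$, forms in both $A/I$ and $B=\HH^\gr(\xQp{2p})$ the ideal generated by one distinguished class and its divided companions, identifies the sub and the quotient separately with $\susp^{\nu}\HH^{n\Omega_1-\nu+RO(\GG)}(\Xpq{(p-s)}{s})$ and $\HH^{n\Omega_1+RO(\GG)}(\Xpq{s}{(p-s)})$, and concludes by the five lemma applied to diagram~(\ref{eqn:five lemma}). Your endgame---a surjection from a module spanned by at most $2p$ elements onto a free $\HS$-module of rank $2p$ is an isomorphism---is a legitimate alternative; both versions rest on exactly the same reductions.

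Two steps of your sketch would fail as written. First, the ``analogue of Lemma~\ref{lem:basis} for $\Xpq{(p-s)}{s}$ in the shifted gradings'' is false in general: for $|n|\geq 2$ the bases of $\HH^{n\Omega_1-\nu+RO(\GG)}(\Xpq{(p-s_n)}{s_n})$ from \cite{CHTFiniteProjSpace} contain divided monomials such as $\zeta_0^{-k}\cwd[p-s_n]\cxwd[d]$ (for $p=2$, $n=2$ the basis of the relevant coset of $\Xq{2}$ is $\{1,\zeta_0^{-1}\cxwd\}$), and these have no preimage in $\HH^\gr(BU(1))$, so the pushforward summand is \emph{not} spanned by $m_{s_n}$ times pullbacks. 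This is precisely why the theorem's statement, and the paper's generation argument, adjoin the classes $\zeta_0^{-k}\cwd[p-s]m_s$ and $\zeta_1^{-k}\cxwd[s]m_s$ as part of the structure; your spanning set must include them. Second, relations (ii) and (iii) alone cannot reduce products $m_sm_t$: the chained identity $\cwd[k]m_s=\cxwd[k]m_{s+k}$ only moves Euler-class multiples of a \emph{single} generator. The reduction requires relation (i) together with the divisibility facts, via the induction the paper runs, $\zeta_0\, m_{p-s}m_{s+r}=\cwd[s+r-1]\cxwd[p-s-r]m_{p-s}-\zeta_1\, m_{p-s}m_{s+r-1}$, anchored at $m_{p-s}m_s=0$, and the derived identity~(\ref{eqn:xo2 identity}), $\cwd[s]\cxwd[p-s]=(\zeta_0\cwd+\zeta_1\cxwd)m_s$, which is also what collapses the infinite-rank image of $\HH^\gr(BU(1))$ onto the rank-$p$ piece in each coset. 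With those two repairs---and the same care you already flag about where divisions by $\zeta_0,\zeta_1$ occur---your normal-form argument goes through.
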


\begin{proof}
We first check that the relations given do hold.
That $\cwd[p-s]m_s$ is infinitely divisible by $\zeta_0$ follows from the fact that
$\cwd[p-s]m_s = (j_{p-s})_!(\cwd[p-s])$ and $\cwd[p-s]$ is infinitely divisible by $\zeta_0$
in the cohomology of $\Xpq {(p-s)}s$, per \cite{CHTFiniteProjSpace}.
That $\cxwd[s]m_s$ is infinitely divisible by $\zeta_1$ is similar.

We pause to note that we can now say that the fact
that the elements $m_s$ generate follows from Proposition~\ref{prop:xo2 splitting}
and Lemma~\ref{lem:basis}, in the sense that we also need to adjoin elements
like $\zeta_0^{-1}\cwd[p-s]m_s$ that we know exist.

We could give algebraic proofs of the remaining relations, using the bases implied
by Proposition~\ref{prop:xo2 splitting} and determining coefficients by reducing nonequivariantly
and examining fixed points. We find the following geometric verifications more enlightening.
See the Appendix for details on representing cohomology elements using singular manifolds.

To see that $\cwd[s]\cxwd[p-s-1] = \zeta_0 m_{s+1} + \zeta_1 m_{s}$,
represent $\cwd[s]\cxwd[p-s-1]$ by
\begin{multline*}
    \{ [\vec x:\vec y] \in \xQp{2p} \mid x_i = 0 \text{ for } p-s+1\leq i \leq p \text{ and } y_j = 0 \text{ for } 1\leq j\leq p-s-1 \} \\
    = j_{p-s-1}(\Xpq {(p-s-1)}{(s+1)}) \union j_{p-s}(\Xpq{(p-s)}{s}
\end{multline*}
(since the defining equation for $\xQp{2p}$ reduces to $x_{p-s}y_{p-s} = 0$ on this subset)
with intersection we'll write as 
\begin{multline*}
    \Xpq{(p-s-1)}{s} = \{ [\vec x:\vec y] \in \xQp{2p} \mid 
    x_i = 0 \text{ for } p-s\leq i\leq p \\ \text{ and } y_j = 0 \text{ for } 1\leq j\leq p-s \}.
\end{multline*}
We can take the intersection to be the singular part, getting
\begin{align*}
    \cwd[s]\cxwd[p-s-1] &= [j_{s+1}(\Xpq{(p-s-1)}{(s+1)} \union j_{p-s}(\Xpq {(p-s)}s); \Xpq{(p-s-1)}{s}]^* \\
    &= [j_{p-s-1}(\Xpq{(p-s-1)}{(s+1)}; \Xpq{(p-s-1)}{s}]^* \\
    &\qquad {}+ [j_{p-s}(\Xpq {(p-s)}s); \Xpq{(p-s-1)}{s}]^* \\
    &= \zeta_0m_{s+1} + \zeta_1m_{s}.
\end{align*}

To see that $\cxwd m_{s+1} = \cwd m_{s}$, both sides are represented by $\Xpq{(p-s-1)}{s}$ as above.

To see that $m_{s}m_{p-s} = 0$, we know that these two elements are represented by
$j_{p-s}(\Xpq {(p-s)}s)$ and $j_{s}(\Xpq s{(p-s)})$, respectively.
However, we could also represent $m_{p-s}$ by $i_{s}(\Xpq s{(p-s)})$,
because the intersection of the associated affine subspaces 
$j_s(\Cpq s{(p-s)})\intersect i_s(\Cpq s{(p-s)})$ has even codimension
$2\cdot\min\{s,p-s\}$ in each of them.
Since $j_{p-s}(\Xpq {(p-s)}s)$ is disjoint from $i_{s}(\Xpq s{(p-s)})$, the product $m_{s}m_{p-s}$ is 0.

Finally, we need to explain why these relations suffice to determine the whole structure
of the algebra. 
We first note an implication of the relations:
\begin{equation}\label{eqn:xo2 identity}
    \cwd[s]\cxwd[p-s] = (\zeta_0\cwd + \zeta_1\cxwd)m_{s}.
\end{equation}
To see this, if $s < p$, we have
\begin{align*}
    \cwd[s]\cxwd[p-s] &= (\cwd[s]\cxwd[p-s-1])\cxwd \\
    &= \zeta_0\cxwd m_{s+1} + \zeta_1\cxwd m_{s} \\
    &= (\zeta_{0}\cwd + \zeta_1\cxwd) m_{s}
\end{align*}
and, if $s = p$,
\begin{align*}
    \cwd[p] &= \cwd\cdot\cwd[p-1] \\
    &= \zeta_0\cwd m_{p} + \zeta_1\cwd m_{p-1} \\
    &= (\zeta_0\cwd + \zeta_1\cxwd) m_{p}.
\end{align*}
Let 
\[
    A = \HH^\gr(B_G U(1))[m_{s},\ \zeta_0^{-k}\cwd[p-s]m_{s},\ \zeta_1^{-k}\cxwd[s] m_{s} \mid 0\leq s\leq p,\ k\geq 1],
\]
meaning that we take the polynomial algebra in the symbols shown and impose the relations that
\begin{align*}
    \zeta_0\cdot\zeta_0^{-k}\cwd[p-s]m_{s} &= \zeta_0^{-(k-1)}\cwd[p-s]m_{s} \\
\intertext{and}
    \zeta_1\cdot\zeta_1^{-k}\cxwd[s]m_{s} &= \zeta_1^{-(k-1)}\cxwd[s]m_{s}
\end{align*}
for $k\geq 1$. Let $I\subset A$ be the ideal generated by the relations given in the statement of the theorem.
Consider an $n\in\ZZ$ and fix $s = s_n$.
Define ideals $J$ and $K$ by
\[
    J = \rels{m_{p-s},\ \zeta_0^{-k}\cwd[s]m_{p-s},\ \zeta_1^{-k}\cxwd[p-s]m_{p-s} \mid k\geq 1}\subset A
\]
and
\[
    K = \rels{m_{p-s},\ \zeta_0^{-k}\cwd[s]m_{p-s},\ \zeta_1^{-k}\cxwd[p-s]m_{p-s} \mid k\geq 1}\subset B = \HH^\gr(\xQp{2p})
\]
(Morally, $J$ and $K$ are the principal ideals generated by $m_{p-s}$ in their respective
rings, but we need to explicitly include the quotient elements indicated.)
We have a ring map $f\colon A\to B$ taking generators to elements of the same name, and we have shown above
that $I \subset\ker f$, so we get the following commutative diagram.
\begin{equation}\label{eqn:five lemma}
    \begin{gathered}
    \xymatrix{
     0 \ar[r] & (J+I)/I \ar[r] \ar[d] & A/I \ar[r] \ar[d] & A/(J+I) \ar[r] \ar[d] & 0 \\
     0 \ar[r] & K \ar[r] & B \ar[r] & B/K \ar[r] & 0
    }
    \end{gathered}
\end{equation}
It follows from Proposition~\ref{prop:xo2 splitting} that
\begin{align*}
    K^{n\Omega_1+RO(\GG)} &\iso \susp^{\nu} \HH^{n\Omega_1-\nu+RO(\GG)}(\Xpq {p-s}s) \\
\intertext{and}
    (B/K)^{n\Omega_1+RO(\GG)} &\iso \HH^{n\Omega_1+RO(\GG)}(\Xpq s{(p-s)}).
\end{align*}
where $\nu = s\omega + (p-s)\chi\omega - 2\sigma$.
Now, $A/(J+I)$ is obtained by setting $m_{p-s} = 0$ as well as all the terms
$\zeta_0^{-k}\cwd[s]m_{p-s}$ and $\zeta_1^{-k}\cxwd[p-s]m_{p-s}$,
then imposing the relations in the statement of the theorem.
From (\ref{eqn:xo2 identity}) we then get
\[
    \cwd[s]\cxwd[p-s] = (\zeta_0\cwd + \zeta_1\cxwd)m_{s} = 0.
\]
On the other hand, any monomial involving another $m_{t}$ that appears
in grading $n\Omega_1 + RO(\GG)$ must be a multiple of $\zeta_0 m_{t}$, $\zeta_1 m_{t}$,
$\cwd m_{t}$, or $\cxwd m_{t}$, and the relations allow us to rewrite such
monomials in terms of $m_{p-s}$, $\zeta_0$, $\zeta_1$, $\cwd$, and $\cxwd$,
hence as an element coming from $\HH^\gr(BU(1))$.
Finally, from \cite{CHTFiniteProjSpace}, we can see that
$\HH^{n\Omega_1+RO(\GG)}(\Xpq s{(p-s)})$ is obtained from $\HH^{n\Omega_1+RO(\GG)}(BU(1))$
by imposing the one relation $\cwd[s]\cxwd[p-s] = 0$.
It follows that
\[
    A/(J+I) \iso B/K
\]
in gradings $n\Omega_1 + RO(\GG)$.

Now consider $(J+I)/I \iso J/(J\intersect I)$.
One issue we need to deal with is possible elements in $J$ involving products $m_{p-s}m_{t}$.
 To deal with these, assume that $0 < r \leq p-s$ and write
\begin{align*}
    \zeta_0 m_{p-s}m_{s+r} &= m_{p-s}(\cwd[s+r-1]\cxwd[p-s-r] - \zeta_1 m_{s+r-1}) \\
    &= \cwd[s+r-1]\cxwd[p-s-r]m_{p-s} - \zeta_1 m_{p-s}m_{s+r-1}.
\end{align*}
The first term on the right is divisible by $\zeta_0$ by assumption
and the second by induction on $r$
(the beginning of the induction being $m_{p-s}m_{s} = 0$).
By induction, we can then write $m_{p-s}m_{s+r}$ as a multiple of $m_{p-s}$, with the other
factor a polynomial in $\zeta_0$, $\zeta_1$, $\cwd$, and $\cxwd$ 
(with $\zeta_0^{-k}\cwd[s]m_{p-s}$ or $\zeta_1^{-k}\cxwd[p-s]m_{p-s}$ possibly appearing).
A similar argument allows us to rewrite $m_{p-s}m_{s-r}$ for $0 < r \leq s$.
We also have, from (\ref{eqn:xo2 identity}), that
\[
    \cwd[p-s]\cxwd[s] m_{p-s} = (\zeta_0\cwd + \zeta_1\cxwd)m_{s}m_{p-s} = 0.
\]
We then see that we have the generators
and relations needed to show that
\[
    (J+I)/I \iso \susp^{\nu}\HH^{m\Omega_1-\nu+RO(\GG)}(\Xpq {p-s}s).
\]

The two isomorphisms shown then imply, via (\ref{eqn:five lemma}),
that $A/I \iso B$, in other words, the relations given in the statement
of the theorem determine the ring $\HH^\gr(\xQp{2p})$ as claimed.

\end{proof}

\begin{remark}
Equation (\ref{eqn:xo2 identity}) is interesting in its own right.
From \cite[Proposition~6.5]{CHTFiniteProjSpace}, we can rewrite it as
\[
    \cwd[s]\cxwd[p-s] = e(\chi O(2)) m_{s}.
\]
That the  Euler class of $\chi O(2)$ shows up here can be explained using an equivariant analogue
of the excess intersection formula, as given in the nonequivariant case
by Fulton \cite[Theorem~6.3]{FultonIntersection}.
The class $\cwd[s]\cxwd[p-s]$ is represented by $\Xpq {(p-s)}s$ in the cohomology of $\Xpq pp$.
The intersection with $\xQp{2p}$ should then represent $\cwd[s]\cxwd[p-s]$ in the cohomology of $\xQp{2p}$,
except that the intersection is not transverse, it is all of $\Xpq {(p-s)}s$.
Where the sum of the two tangent bundles on the intersection should be the whole tangent bundle of $\Xpq pp$, it
falls short by the ``excess normal bundle'' $\chi O(2)$, the normal bundle to $\xQp{2p}$ in $\Xpq pp$.
The excess intersection formula then tells us that, in the cohomology of $\xQp{2p}$,
$\cwd[s]\cxwd[p-s]$ is the product of the intersection, $[\Xpq {(p-s)}s]^*$, with the Euler class
of the excess normal bundle, that is,
$\cwd[s]\cxwd[p-s] = e(\chi O(2)) [\Xpq {(p-s)}s]^* = e(\chi O(2)) m_{p-s}$.
We will give a proof of this equivariant excess intersection formula in \cite{CH:QuadricsII},
where we will need it for other calculations.
\end{remark}

It's interesting to see how the structure in Theorem~\ref{thm:xo2 mutliplicative} restricts to the $RO(\GG)$-graded part.

\begin{corollary}\label{cor:ro(g) basis}
Let $s = \lfloor p/2 \rfloor$.
If $p$ is even, then $\HH^{RO(\GG)}(\xQp{2p})$ has a basis over $\HS$ given by
\begin{multline*}
    \{ 1,\ \zeta_0\cwd,\ \cwd\cxwd,\ \zeta_0\cwd[2]\cxwd,\ \cwd[2]\cxwd[2], \ldots,\ \zeta_0\cwd[s]\cxwd[s-1], \\
        m_{s},\ \zeta_0\cwd m_{s},\ \cwd\cxwd m_{s},\ \zeta_0\cwd[2]\cxwd m_{s},
        \ \cwd[2]\cxwd[2] m_{s}, \ldots,\ \zeta_0\cwd[s]\cxwd[s-1] m_{s} \}.
\end{multline*}
If $p$ is odd, then a basis is given by
\begin{multline*}
      \{ 1,\ \zeta_0\cwd,\ \cwd\cxwd,\ \zeta_0\cwd[2]\cxwd,\ \cwd[2]\cxwd[2],\ \ldots,\ \cwd[s]\cxwd[s], \\
        \zeta_1 m_{s},\ \cwd m_{s},\ \zeta_0\cwd[2] m_{s},\ \cwd[2]\cxwd m_{s}, \\
        \zeta_0\cwd[3]\cxwd m_{s},\ \ldots,\ \zeta_0\cwd[s+1]\cxwd[s-1] m_{s} \}.
\end{multline*}
\qed
\end{corollary}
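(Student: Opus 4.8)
The plan is to read the two bases directly off the additive splitting of Corollary~\ref{cor:xo2 additive}, specialized to the $RO(\GG)$-graded part. Since the extended grading decomposes into cosets $n\Omega_1 + RO(\GG)$ with $n\in\Z$, the $RO(\GG)$-graded cohomology is exactly the $n=0$ summand, for which $s = s_0 = \lfloor p/2 \rfloor$, matching the $s$ in the statement. Corollary~\ref{cor:xo2 additive} then gives
\[
    \HH^{RO(\GG)}(\xQp{2p}) \iso \HH^{RO(\GG)}(\Xpq s{(p-s)}) \dirsum \susp^{\nu}\HH^{RO(\GG)-\nu}(\Xpq {(p-s)}s),
\]
with $\nu = s\omega + (p-s)\chi\omega - 2\sigma$, so it suffices to exhibit a basis for each summand and to express the second one as elements of $\HH^\gr(\xQp{2p})$.

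For the first summand I would quote the explicit $RO(\GG)$-graded basis of $\HH^\gr(\Xpq s{(p-s)})$ from \cite{CHTFiniteProjSpace}; this is the $n=0$ instance of the bases recorded in the proof of Lemma~\ref{lem:basis}, lifted through the surjection $i_s^*$ to the corresponding monomials in $\HH^\gr(\xQp{2p})$ (which exist there, being restrictions of classes from $BU(1)$). When $p = 2s$ one lands in the ``$p-n$ even'' branch and obtains $\{1, \zeta_0\cwd, \cwd\cxwd, \dots, \zeta_0\cwd[s]\cxwd[s-1]\}$, using $p-s-1 = s-1$; when $p = 2s+1$ the ``$p-n$ odd'' branch gives $\{1, \zeta_0\cwd, \cwd\cxwd, \dots, \cwd[s]\cxwd[s]\}$, using $p-s-1 = s$. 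These are precisely the first lines of the two displayed bases.

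For the second summand the decisive point is the grading shift. As $\nu$ has $\Omega_1$-degree $2s-p$, the group $\HH^{RO(\GG)-\nu}(\Xpq{(p-s)}s)$ sits in the coset $n'\Omega_1 + RO(\GG)$ with $n' = p-2s \in \{0,1\}$, and $\Xpq{(p-s)}s$ is itself the template space $\Xpq{s_{n'}}{(p-s_{n'})}$. I would quote the $n'$-graded basis of that projective space (the $n'=0$, resp.\ $n'=1$, instance of Lemma~\ref{lem:basis}) and push it forward along the splitting map $(j_{p-s})_!$. Because Lemma~\ref{lem:basis} shows each such basis element is the restriction of a monomial $y$ in $\zeta_0, \zeta_1, \cwd, \cxwd$ from $\HH^\gr(BU(1))$, the projection formula gives $(j_{p-s})_!(y) = y\,(j_{p-s})_!(1) = y\,m_s$---the same mechanism already used in the proof of Theorem~\ref{thm:xo2 mutliplicative}, where $\cwd[p-s]m_s = (j_{p-s})_!(\cwd[p-s])$. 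Thus the second summand is spanned by those monomials times $m_s$; running the $n'=0$ and $n'=1$ computations produces the second lines of the two bases, the odd case yielding $\{\zeta_1 m_s, \cwd m_s, \zeta_0\cwd[2] m_s, \dots, \zeta_0\cwd[s+1]\cxwd[s-1] m_s\}$ via $s'=s+1$ and $p-s'-1 = s-1$.

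The main obstacle is bookkeeping rather than anything conceptual: one must track the $\Omega_1$-degree of $\nu$ carefully, since in the odd case the second summand is drawn into the $n'=1$ coset rather than $RO(\GG)$ itself, and one must correctly align the ``swapped'' space $\Xpq{(p-s)}s$ with the template $\Xpq{s_{n'}}{(p-s_{n'})}$ of Lemma~\ref{lem:basis}, so that the roles of $\zeta_0,\cwd$ versus $\zeta_1,\cxwd$ come out right. Once the identification $(j_{p-s})_!(y) = y\,m_s$ is in place, all that remains is the routine check that the indexing of exponents reproduces the stated monomials.
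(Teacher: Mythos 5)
Your proposal is correct and follows essentially the same route as the paper, which leaves the corollary unproved (marked \qed) precisely because it is the $n=0$ instance of the description given right after Corollary~\ref{cor:xo2 additive}: a basis of $\HH^{n\Omega_1+RO(\GG)}(\Xpq{s_n}{(p-s_n)})$ lifted through $i_{s_n}^*$, together with $m_{s_n}$ times the basis of $\HH^{n\Omega_1-\nu+RO(\GG)}(\Xpq{(p-s_n)}{s_n})$ pushed forward along $(j_{p-s_n})_!$. Your bookkeeping of the grading shift (landing in the $n'=p-2s\in\{0,1\}$ coset for the second summand) and your use of the projection formula $(j_{p-s})_!(y)=y\,m_s$ are exactly the mechanisms the paper relies on.
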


It's useful to picture how these basis elements are arranged. The following diagrams
show the locations of the basis elements in the $RO(\GG)$ grading for $p = 4$ and $p=5$.
A basis element in grading $a + b\sigma$ is shown at $(a,b)$, and the grid lines
are spaced every two.
\[
\begin{tikzpicture}[scale=0.4]
	\draw[step=1cm, gray, very thin] (-1.8, -1.8) grid (4.8,4.8);
	\draw[thick] (-2, 0) -- (5, 0);
	\draw[thick] (0, -2) -- (0, 5);

 	\node[below] at (1.5, -2) {$\HH^{RO(\GG)}(\xQp{8})$};

    \fill (0,0) circle(0.25cm);
    \fill (0,1) circle(0.25cm);
    \fill (1,1) circle(0.25cm);
    \fill (1,2) circle(0.25cm);
    \fill (2,1) circle(0.25cm);
    \fill (2,2) circle(0.25cm);
    \fill (3,2) circle(0.25cm);
    \fill (3,3) circle(0.25cm);
    \node[below right] at (2,1) {$m_{2}$};

\end{tikzpicture}
\qquad\qquad
\begin{tikzpicture}[scale=0.4]
	\draw[step=1cm, gray, very thin] (-1.8, -1.8) grid (5.8,5.8);
	\draw[thick] (-2, 0) -- (6, 0);
	\draw[thick] (0, -2) -- (0, 6);

 	\node[below] at (2, -2) {$\HH^{RO(\GG)}(\xQp{10})$};

    \fill (0,0) circle(0.25cm);
    \fill (0,1) circle(0.25cm);
    \fill (1,1) circle(0.25cm);
    \fill (1,2) circle(0.25cm);
    \fill (2,2) circle(0.25cm); \draw (2,2) circle(0.35cm);
    \fill (3,2) circle(0.25cm);
    \fill (3,3) circle(0.25cm);
    \fill (4,3) circle(0.25cm);
    \fill (4,4) circle(0.25cm);
    \node[below right] at (2,2) {$\zeta_1 m_{2}$};

\end{tikzpicture}
\]
In the first diagram, $m_{2}$ lies in grading $4 + 2\sigma$.
The double circle in the second diagram indicates that there are two basis elements
in that grading, $4 + 4\sigma$, which the corollary
gives as $\cwd[2]\cxwd[2]$ and $\zeta_1 m_{2}$.
Since we have $\cwd[2]\cxwd[2] = \zeta_0 m_{3} + \zeta_1 m_{2}$,
we could instead use $\zeta_0 m_{3}$ and $\zeta_1 m_{2}$ as the basis
elements in that grading,
similar to the nonequivariant choice between the bases $\{ \cd[p-1],m_{[0]} \}$ or
$\{ m_{[0]}, m_{[1]} \}$.
In the case of $\xQp 8$, the basis element in grading $2 + 4\sigma$ given by
Corollary~\ref{cor:ro(g) basis} is $\zeta_0\cwd[2]\cxwd$,
which can be rewritten as
\[
    \zeta_0\cwd[2]\cxwd = \zeta_0^2 m_{3} + \xi m_{2},
\]
so we could use $\zeta_0^2 m_{3}$ as a basis element in place of $\zeta_0\cwd[2]\cxwd$,
but this feels less natural than the replacement suggested above in the case of $\xQp{10}$.

To describe the multiplicative structure of the $RO(\GG)$-graded part
purely in terms of these bases is messy.
However, calculations are easily done by viewing these elements as the products they are
and using the relations of Theorem~\ref{thm:xo2 mutliplicative}.
Notice in particular that, if $p$ is even, then $m_{s}^2 = 0$.
If $p$ is odd, we carry out a computation alluded to in the proof of the theorem:
\begin{align*}
    \zeta_1 m_{s+1}^2 &= m_{s+1}(\cwd[s]\cxwd[s] - \zeta_0 m_{s}) \\
        &= \cwd[s]\cxwd[s]m_{s+1},
\end{align*}
so
\[
    m_{s+1}^2 = \zeta_1^{-1}\cwd[s]\cxwd[s]m_{s+1}
\]
using the fact that $\cxwd[s]m_{s+1}$ is divisible by $\zeta_1$.
Although this is a calculation outside of the $RO(\GG)$ grading,
it can be used in the process of doing calculations in that grading.
For example, we can compute
\begin{align*}
    (\zeta_1 m_{s+1})^2 &= \zeta_1^2\cdot \zeta_1^{-1}\cwd[s]\cxwd[s]m_{s+1} \\
    &= \zeta_1\cwd[s]\cxwd[s]m_{s+1} \\
    &= \cwd[s]\cxwd[s-1]m_{s+1}\bigl((1-\kappa)\zeta_0\cwd + e^2\bigr) \\
    &= (1-\kappa)\zeta_0\cwd[s+1]\cxwd[s-1]m_{s+1} + e^2\cwd[s]\cxwd[s-1]m_{s+1},
\end{align*}
writing the answer in terms of the basis given in Corollary~\ref{cor:ro(g) basis}.

\section{Restriction maps}\label{sec:restrictions}

For computations, and to compare our result to nonequivariant calculations,
it is useful to know the maps
\begin{align*}
    \rho\colon \HH^\alpha(\xQexp p) &\to H^{\rho(\alpha)}(\Qexp{2p}), \\
\intertext{restriction to nonequivariant cohomology, and}
    (-)^\GG\colon \HH^\alpha(\xQexp p) &\to H^{\alpha^\GG}(\xQexp p^\GG) \\
        &\qquad\iso H^{\alpha_0^\GG}(\Xp p) \dirsum H^{\alpha_1^\GG}(\Xq p),
\end{align*}
the fixed-point map.
The map $\rho\colon RO(\Pi \xQp{2p})\to \Z$ on gradings is the
forgetful map that gives the nonequivariant dimension of the representation.
In the case of the fixed-point map, $\alpha^\GG = (\alpha_0^\GG,\alpha_1^\GG) \in\Z^2$ gives
the dimensions of the fixed points of the representation $\alpha$ on each
of the two components of $\xQp{2p}^\GG$.
(An alternative approach, mentioned in the Appendix,
is to regrade the nonequivariant cohomologies on $RO(\Pi\xQp{2p})$
via $\rho$ or $(-)^\GG$, but it is simpler for computations to use the integer
grading. See \cite{CH:bu2} for a general discussion on regrading graded rings.)

For $\rho$, we have the following.

\begin{proposition}
The restriction map $\rho$ is given by the values
\begin{align*}
    \rho(\zeta_0) &= 1 & \rho(\zeta_1) &= 1 \\
    \rho(\cwd) &= \cd & \rho(\cxwd) &= \cd \\
    \text{and\ \ }\rho(m_{s}) &=
            m_{[s]}.
\end{align*}
\end{proposition}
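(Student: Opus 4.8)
The plan is to exploit that $\rho$ is a homomorphism of graded rings, so it is determined by its effect on any set of algebra generators. By Theorem~\ref{thm:xo2 mutliplicative}, $\HH^\gr(\xQp{2p})$ is generated over $\HH^\gr(BU(1))$ by the classes $m_s$, so it suffices to compute $\rho$ on $\zeta_0$, $\zeta_1$, $\cwd$, $\cxwd$, and on each $m_s$; the values on $\rho$ then propagate to the divided classes $\zeta_0^{-k}\cwd[p-s]m_s$ and $\zeta_1^{-k}\cxwd[s]m_s$ automatically, since $\rho(\zeta_0)=\rho(\zeta_1)=1$. First I would dispose of the four classes coming from $BU(1)$. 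These are the restrictions, along the inclusion $\xQexp p \includesin BU(1)$, of the generators of $\HH^\gr(BU(1))$; since $\rho$ is natural, their images are obtained by applying the nonequivariant restriction on $\HH^\gr(BU(1))$, which is recorded in \cite{Co:InfinitePublished} and yields $\rho(\zeta_0) = \rho(\zeta_1) = 1$. For the two Euler classes I would argue directly: $\rho$ sends an equivariant Euler class to the first Chern class of the underlying nonequivariant bundle. As $\cwd = e(\omega\dual)$, this gives $\rho(\cwd) = \cd$; and since $\Cq{}$ restricts to the trivial line $\C$, the bundle $\omega\dual \tensor \Cq{}$ has underlying nonequivariant bundle $\omega\dual$, so $\rho(\cxwd) = \cd$ as well.

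The substance is in computing $\rho(m_s)$. Here I would use the definition $m_s = (j_{p-s})_!(1)$ together with the fact that restriction to nonequivariant cohomology commutes with the Gysin pushforward, so that $\rho(m_s)$ equals the nonequivariant pushforward $(j_{p-s})_!(1)$ along the underlying map of $j_{p-s}$. The image $j_{p-s}(\Xpq{(p-s)}s)$ has underlying nonequivariant space $\PP(M_s)$, so this pushforward is exactly the nonequivariant fundamental class $[\PP(M_s)]^*$. By the classical presentation of $H^*(\Qexp{2p})$ recalled in the introduction (due to Edidin and Graham \cite{EdidinGraham:quadricbundles}), this fundamental class depends only on the parity of $s$ and equals $m_{[s]}$, which is the desired value.

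The main obstacle is justifying that $\rho$ intertwines the equivariant and nonequivariant pushforwards, that is, that $\rho \circ (j_{p-s})_! = (j_{p-s})_! \circ \rho$. I would handle this through the construction of the pushforward via the Thom isomorphism for the normal bundle $\nu$ of Proposition~\ref{prop:xo2 cofibration}, using that the equivariant Thom class restricts to the nonequivariant Thom class of the underlying bundle; alternatively, and more transparently, I would invoke the singular-manifold description of the Appendix, under which $m_s$ is represented by the closed submanifold $j_{p-s}(\Xpq{(p-s)}s)$ and $\rho(m_s)$ is represented by the same submanifold viewed nonequivariantly, namely $\PP(M_s)$, giving $[\PP(M_s)]^* = m_{[s]}$ at once. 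The only remaining bookkeeping is on gradings: one checks that $\rho(\nu) = 2s + 2(p-s) - 2 = 2(p-1)$, which matches the degree of $m_{[s]}$ in $H^*(\Qexp{2p})$, so the computation is consistent.
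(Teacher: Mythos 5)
Your proposal is correct and takes essentially the same route as the paper: the values on $\zeta_0$, $\zeta_1$, $\cwd$, and $\cxwd$ are inherited from $BU(1)$ by naturality (with the integer-grading convention making the regrading units equal to $1$), and $\rho(m_s) = m_{[s]}$ comes from the fact that the equivariant fundamental class of $j_{p-s}(\Xpq{(p-s)}{s}) = \PP(M_s)$ restricts to the nonequivariant fundamental class, which Edidin--Graham's presentation identifies with $m_{[s]}$. The paper's own proof is terser---it simply defers to the discussion in the Introduction---so your explicit verification that $\rho$ commutes with the pushforward $(j_{p-s})_!$, together with the grading check $\rho(\nu) = 2(p-1)$, supplies the justification the paper leaves implicit.
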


\begin{proof}
The calculations of $\rho(\zeta_0)$, $\rho(\zeta_1)$, $\rho(\cwd)$, and $\rho(\cxwd)$ are inherited
from the same calculations for $BU(1)$.
The calculation of $\rho(m_{s})$ follows from the discussion in the Introduction.
\end{proof}

In particular, we can compare the relations given
in Theorem~\ref{thm:xo2 mutliplicative} to the nonequivariant relations discussed
in the Introduction. When we apply $\rho$, we get that
\begin{alignat*}{3}
    \cwd[s]\cxwd[p-s-1] &= \zeta_0 m_{s+1} + \zeta_1 m_{s} &\quad\text{becomes}\quad&&
        \cd[p-1] &=m_{[s+1]}+m_{[s]}=m_{[0]}  + m_{[1]}, \\
    \cxwd m_{s+1} &= \cwd m_{s} &\quad\text{becomes}\quad&&
        \cd\, m_{[s+1]} &= \cd\, m_{[s]}, 
        \\
    m_{s}m_{p-s} &= 0 &\quad\text{implies\ }\quad&&
       m_{[0]}\,m_{[1]} &=
    \begin{cases} 
        \hfill 0 \hfill & \text{if $p$ is odd}\\
        \cd[p-1]m_{[0]}&\text{if $p$ is even.}
    \end{cases}    
\end{alignat*}
For the last, we are using that
\[
    0 = \rho(m_{s}m_{p-s}) =
    \begin{cases}
        m_{[0]}^2 & \text{if $p$ and $s$ are even} \\
        m_{[1]}^2 & \text{if $p$ is even and $s$ is odd} \\
        m_{[0]}m_{[1]} & \text{if $p$ is odd,}
    \end{cases}
\]
together with the fact that 
\[m_{[0]}^2+m_{[0]}m_{[1]}=\cd[p-1]m_{[0]}=\cd[p-1]m_{[1]}=m_{[0]}m_{[1]}+m_{[1]}^2,
\]
which is obtained by multiplying the first nonequivariant relation by both $m_{[0]}$ and $m_{[1]}$.
\begin{proposition}
The fixed-point map is given by the values
\begin{align*}
    \zeta_0^\GG &= (0,1) & \zeta_1^\GG &= (1,0) \\
    \cwd[\GG] &= (\cd,1) & \cxwd[\GG] &= (1,\cd) \\
    m_{s}^\GG &= (\cd[s],\cd[p-s]).
\end{align*}
\end{proposition}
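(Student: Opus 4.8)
The plan is to use that $(-)^\GG$ is a ring homomorphism together with the fact that $\zeta_0$, $\zeta_1$, $\cwd$, and $\cxwd$ are restricted from $BU(1)$ along $\xQexp p \includesin BU(1)$. Their fixed-point values are therefore forced by the computation on $BU(1)$ recorded in \cite{Co:InfinitePublished} (and the Appendix): reading these off on the two components $\Xp p$ and $\Xq p$ of $\xQp{2p}^\GG$ gives $\zeta_0^\GG = (0,1)$, $\zeta_1^\GG = (1,0)$, $\cwd[\GG] = (\cd,1)$, and $\cxwd[\GG] = (1,\cd)$. The only real content is then the computation of $m_s^\GG$, and for this I would avoid new geometry and instead push the relations of Theorem~\ref{thm:xo2 mutliplicative} through $(-)^\GG$.

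Write $m_s^\GG = (a_s, b_s)$ with $a_s \in H^*(\Xp p) = \Z[\cd]/(\cd[p])$ and $b_s \in H^*(\Xq p) = \Z[\cd]/(\cd[p])$. Applying $(-)^\GG$ to relation (i), the factor $\zeta_0^\GG = (0,1)$ kills the first coordinate of $m_{s+1}^\GG$ and $\zeta_1^\GG = (1,0)$ kills the second of $m_s^\GG$, so the right-hand side restricts to $(a_s, b_{s+1})$; the left-hand side $\cwd[s]\cxwd[p-s-1]$ restricts to $(\cd[s], \cd[p-s-1])$. Comparing coordinates yields $a_s = \cd[s]$ for $0 \le s \le p-1$ and $b_t = \cd[p-t]$ for $1 \le t \le p$, which already determines every value except $a_p$ and $b_0$.

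To fill in the two missing values I would apply $(-)^\GG$ to relation (ii), obtaining $a_{s+1} = \cd\, a_s$ and $b_s = \cd\, b_{s+1}$ for $0 \le s \le p-1$. The instances with $1 \le s+1 \le p-1$ merely reconfirm the values above, while $s = p-1$ gives $a_p = \cd\,a_{p-1} = \cd[p] = 0$ and $s = 0$ gives $b_0 = \cd\,b_1 = \cd[p] = 0$. These are precisely the values the formula $m_s^\GG = (\cd[s],\cd[p-s])$ predicts at $s = p$ and $s = 0$ (using $\cd[p] = 0$), so uniformly $m_s^\GG = (\cd[s], \cd[p-s])$. As an independent check I would intersect the representing manifold $\PP(M_s)$ with each fixed component: $\PP(M_s) \cap \Xp p = \PP(\langle x_1,\dots,x_{p-s}\rangle)$ is a codimension-$s$ linear subspace of $\Xp p$, of class $\cd[s]$, and $\PP(M_s) \cap \Xq p = \PP(\langle y_{p-s+1},\dots,y_p\rangle)$ is a codimension-$(p-s)$ linear subspace of $\Xq p$, of class $\cd[p-s]$. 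Relation (iii) provides a further sanity check, restricting to $(\cd[s]\cd[p-s], \cd[p-s]\cd[s]) = (0,0)$, consistent with $m_s m_{p-s} = 0$.

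The main obstacle is not the determination of $m_s^\GG$---the relations fix these with no freedom---but the bookkeeping attached to the four inherited classes. One must correctly match the ordering $\xQp{2p}^\GG = \Xp p \disjunion \Xq p$ to the normalization in \cite{Co:InfinitePublished}, i.e.\ decide which component is the vanishing locus of $\zeta_0$ versus $\zeta_1$ and, correspondingly, which coordinate of $\cwd[\GG]$ and $\cxwd[\GG]$ carries the $\cd$; a single transposition there would flip the answer for every $m_s^\GG$. The other delicate point, were one to establish the inherited values directly rather than cite them, is the restriction of the equivariant Euler classes $\cwd$ and $\cxwd$ to the component on which $\omega\dual$ (respectively $\omega\dual \tensor \Cq{}$) carries the sign action, where the value is the unit $1$ rather than $\cd$; this same normalization is exactly what the algebraic route lets me take for granted, and it is also why the naive geometric intersection above should be regarded only as a verification, since a priori it determines each coordinate of $m_s^\GG$ only up to such a unit.
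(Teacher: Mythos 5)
Your proposal is correct, and for the part that carries all the content---the computation of $m_s^\GG$---it takes a genuinely different route from the paper. The paper's proof is geometric and independent of the multiplicative structure theorem: it uses the definition $m_s = (j_{p-s})_!(1)$, observes that the representing manifold has fixed set $j_{p-s}(\Xpq{(p-s)}{s})^\GG = \Xp{p-s}\disjunion \Xq{s}$, and reads off that $\Xp{p-s}\subset \Xp p$ represents $\cd[s]$ while $\Xq{s}\subset\Xq p$ represents $\cd[p-s]$ (this is exactly your ``independent check,'' which the paper treats as the whole proof; your worry about a unit ambiguity is unfounded, since the class of a complex linear subspace of complex projective space is exactly the appropriate power of $\cd$, with no sign or unit freedom). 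Your main argument instead pushes relations (i) and (ii) of Theorem~\ref{thm:xo2 mutliplicative} through the ring map $(-)^\GG$ and solves for the coordinates, including the boundary values $a_p = b_0 = \cd[p] = 0$. This is legitimate and non-circular: the paper proves those relations by geometric verifications, explicitly declining the alternative of fixing coefficients via fixed points, so nothing in Theorem~\ref{thm:xo2 mutliplicative} depends on the proposition you are proving. What each approach buys: the paper's argument is self-contained and would work even before the ring structure is known, and it makes visible \emph{why} the answer is what it is; your argument is purely formal, avoids any discussion of how fixed points interact with pushforwards and Poincar\'e duality, and has the pleasant feature of showing that the values $m_s^\GG$ are forced by the algebra---in effect you have run the paper's Section~\ref{sec:restrictions} consistency check (``what happens to each of the relations when we take fixed points'') in reverse, turning it from a sanity check into a derivation. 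Your cautionary remark about matching the normalization of $\zeta_0$ versus $\zeta_1$ across the two components is well taken, and is exactly the point at which the citation to the $BU(1)$ computation does real work.
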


\begin{proof}
Again, the first four values are known from the cohomology of $BU(1)$.
For the last, $m_{s}^\GG$ is represented by 
\[
    j_{p-s}(\Xpq {(p-s)}s)^\GG = \Xp {p-s} \disjunion \Xq s,
\]
and $\Xp {p-s}$ represents the power $\cd[s] \in H^*(\Xp p)$ while $\Xq s$ represents $\cd[p-s]\in H^*(\Xq p)$.
\end{proof}

As with $\rho$, it's interesting to see what happens to each of the relations when we take fixed points.
\begin{alignat*}{3}
    \cwd[s]\cxwd[p-s-1] &= \zeta_0 m_{s+1} + \zeta_1 m_{s} &\ \text{becomes}\ &&
        (\cd[s],\cd[p-s-1]) &= (0,\cd[p-s-1]) + (\cd[s],0)\\
    \cxwd m_{s+1} &= \cwd m_{s} &\ \text{becomes}\ &&
        (\cd[r+1],\cd[s]) &= (\cd[r+1],\cd[s]) \\
    m_{s}m_{p-s} &= 0 &\ \text{becomes}\ &&
        (\cd[s],\cd[p-s])(\cd[p-s],\cd[s]) &= (\cd[p],\cd[p]) = (0,0).
\end{alignat*}
The triviality of the first two reflects the fact that the only interesting relation
in the nonequivariant cohomology of $\Xp p$ is that $\cd[p] = 0$.

\section{A Grassmannian}\label{sec:grassmannian}

We look at an interesting special case in detail:
The Grassmannian $\Grpq 23{}$ of $2$-planes in $\Cpq 3{}$ is $\GG$-diffeomorphic to
$\xQexp{3}$. 
To see this, we use a modified version of the Pl\"ucker embedding.
We represent 2-planes in $\Cpq 3{}$ by $2\times 4$-matrices of complex numbers, which we
think of as arrays $[\vec a_1\ \vec a_2\ \vec a_3\ \vec a_4]$ with linearly independent rows, in which
the columns
$\vec a_1$, $\vec a_2$, and $\vec a_3$ lie in $\Cp 2$ while $\vec a_4$ is in $\Cq 2$.
Such a matrix represents the 2-plane in $\Cpq 3{}$ spanned by its rows, and two such matrices represent the
same 2-plane if they are in the same orbit of the general linear group $\GL(2,\C)$.
The Pl\"ucker embedding (modified slighly by a negative sign) is then the map
$\psi\colon \Grpq 23{}\to \Xpq 33$ defined by
\[
    \psi[\vec a_1\ \vec a_2\ \vec a_3\ \vec a_4]
    = [\Delta_{12}:\Delta_{13}:\Delta_{23}:\Delta_{14}:-\Delta_{24}:\Delta_{34}],
\]
where $\Delta_{ij} = |\vec a_i\ \vec a_j|$ is the determinant.
The image of this map is exactly $\xQexp 3$.
The fixed set $\Grpq 23{}^\GG$ consists of two components, $\Grp 23$, which maps to
$\Xp 3\subset \xQexp 3$, and $\Xp 3\times\Xq {}$, which maps to $\Xq 3\subset \xQexp 3$.
This identifies the representation rings $RO(\Pi\Grpq 23{})$ and $RO(\Pi\xQexp 3)$ in a natural way.

The Grassmannian has a tautological 2-plane bundle $\pi$.
The Pl\"ucker embedding classifies its exterior power, or determinant line bundle, $\lambda = \ext^2\pi$,
that is, $\lambda = \psi^*\omega$.
Writing $\cld$ for the Euler class of the dual of $\lambda$, 
and using the structure of $\HH^\gr(BU(1))$ from \cite{Co:InfinitePublished},
Theorem~\ref{thm:xo2 mutliplicative} has the following corollary.

\begin{corollary}
As an algebra over $\HS$, the cohomology $\HH^\gr(\Grpq 23{})$ is generated by elements
\begin{itemize}
    \item $\zeta_0$ in grading $\Omega_0$,
    \item $\zeta_1$ in grading $\Omega_1$,
    \item $\cld$ in grading $\lambda$, where $\lambda = 2 + \Omega_1$,
    \item $\cxld$ in grading $\chi\lambda$, where $\chi\lambda = 2 + \Omega_0$,
    \item $m_{0}$ in grading $-3\Omega_1 + 4\sigma$,
    \item $m_{1}$ in grading $-\Omega_1 + 2 + 2\sigma$,
    \item $m_{2}$ in grading $\Omega_1 + 4$, and
    \item $m_{3}$ in grading $3\Omega_1 + 6 - 2\sigma$,
\end{itemize}
subject to the facts that
\begin{itemize}
    \item $m_{3}$, $\cld m_{2}$, $\cld[2] m_{1}$, and $\cld[3] m_{0}$ are infinitely divisible by $\zeta_0$,
    \item $\cxld[3]m_{3}$, $\cxld[2]m_{2}$, $\cxld m_{1}$, and $m_{0}$ are infinitely divisible by $\zeta_1$,
\end{itemize}
and the relations
\begin{align*}
    \zeta_0\zeta_1 &= \xi \\
    \zeta_1\cxld &= (1-\kappa)\zeta_0\cld + e^2 \\
    \cld[s]\cxld[2-s] &= \zeta_0 m_{s+1} + \zeta_1 m_{s} && 0\leq s \leq 2 \\
    \cxld m_{s+1} &= \cld m_{s} && 0\leq s \leq 2 \\
    m_{3}m_{0} &= 0 \\
    m_{2}m_{1} &= 0.
\end{align*}
\qed
\end{corollary}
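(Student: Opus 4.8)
The plan is to read the corollary off from Theorem~\ref{thm:xo2 mutliplicative} with $p=3$, transport it across the $\GG$-diffeomorphism $\psi$, and then replace the base ring $\HH^\gr(BU(1))$ by its own presentation over $\HS$.

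Since $\psi$ is a $\GG$-diffeomorphism, $\psi^*$ is an isomorphism of $\HS$-algebras from $\HH^\gr(\xQexp 3)$ to $\HH^\gr(\Grpq 23{})$, compatible with the identification of the grading rings $RO(\Pi\xQexp 3) \iso RO(\Pi\Grpq 23{})$ noted above. Because $\lambda = \psi^*\omega$, naturality of the Euler class gives $\psi^*\cwd = \cld$ and $\psi^*\cxwd = \cxld$, while $\zeta_0$, $\zeta_1$, and the fundamental classes $m_s$ pull back to the elements of the same name. So it suffices to specialize Theorem~\ref{thm:xo2 mutliplicative} to $p = 3$ and rename $\cwd, \cxwd$ as $\cld, \cxld$.

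Specializing, the definition of $m_s$ with $p = 3$ produces the eight gradings listed, and transporting the grading of $\omega$ through $\psi$ (using the relation $\Omega_0 + \Omega_1 = 2\sigma - 2$ from \cite{Co:InfinitePublished}) gives $\lambda = 2 + \Omega_1$ and $\chi\lambda = 2 + \Omega_0$. The divisibility assertions are exactly the statements that $\cwd[3-s]m_s$ is infinitely divisible by $\zeta_0$ and $\cxwd[s]m_s$ by $\zeta_1$, rewritten with $\cld, \cxld$, giving the two displayed lists. Relations~(i) and~(ii) of the theorem become the two middle families (with $p-s-1 = 2-s$), and relation~(iii), $m_s m_{3-s}=0$, collapses to the two independent relations $m_3 m_0 = 0$ and $m_2 m_1 = 0$.

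Finally, I would pass from an algebra over $\HH^\gr(BU(1)) = \mathcal{R}^\diamond$ to an algebra over $\HS$. From \cite{Co:InfinitePublished}, $\HH^\gr(BU(1))$ is generated over $\HS$ by $\zeta_0, \zeta_1, \cwd, \cxwd$ subject to $\zeta_0\zeta_1 = \xi$ and $\zeta_1\cxwd = (1-\kappa)\zeta_0\cwd + e^2$, together with the infinite divisibilities of $\cwd[k]$ by $\zeta_0$ and $\cxwd[k]$ by $\zeta_1$. Splicing this presentation of the base ring into the relative presentation of Theorem~\ref{thm:xo2 mutliplicative}---adjoining these four generators and two relations to the generators $m_s$ and their relations---yields the claimed presentation over $\HS$. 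The one point that needs care is this splicing: one must verify that combining the ``$\HH^\gr(BU(1))$-algebra'' presentation with the ``$\HS$-algebra'' presentation of $\HH^\gr(BU(1))$ introduces no unexpected relations and correctly accounts for all the infinitely divisible elements, both those inherited from the base ring and those of the form $\zeta_0^{-k}\cld[3-s]m_s$ and $\zeta_1^{-k}\cxld[s]m_s$. This is the standard behavior of iterated relative presentations, and the divisibility bookkeeping parallels that already carried out in the proof of Theorem~\ref{thm:xo2 mutliplicative}.
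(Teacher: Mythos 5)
Your overall route is exactly the paper's: the corollary appears there with no separate proof precisely because it is read off from Theorem~\ref{thm:xo2 mutliplicative} with $p=3$, transported across the Pl\"ucker diffeomorphism $\psi$ (which identifies $RO(\Pi\Grpq 23{})$ with $RO(\Pi\xQexp 3)$ and satisfies $\lambda = \psi^*\omega$, hence $\psi^*\cwd = \cld$ and $\psi^*\cxwd = \cxld$), and then rewritten over $\HS$ using the presentation of $\HH^\gr(BU(1))$ from \cite{Co:InfinitePublished}. Your specialization of the gradings, the divisibility statements, and the relations at $p=3$ is accurate, including the observation that relation (iii) collapses to the two relations $m_3m_0=0$ and $m_2m_1=0$.

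There is, however, one concrete error in your final splicing step: you assert that the presentation of $\HH^\gr(BU(1))$ over $\HS$ includes ``the infinite divisibilities of $\cwd[k]$ by $\zeta_0$ and $\cxwd[k]$ by $\zeta_1$.'' It does not. By \cite[Theorem~11.3]{Co:InfinitePublished} (restated in the Appendix), $\HH^\gr(BU(1))$ is generated over $\HS$ by $\zeta_0$, $\zeta_1$, $\cwd$, $\cxwd$ subject to \emph{only} the two relations $\zeta_0\zeta_1=\xi$ and $\zeta_1\cxwd=(1-\kappa)\zeta_0\cwd+e^2$; there are no divisibility elements. Indeed, no power $\cwd[k]$ is divisible by $\zeta_0$ there: applying the fixed-point map, $\cwd[\GG]=(\cd,1)$, so $(\cwd[k])^\GG = (\cd[k],1)$ with $\cd[k]\neq 0$ in $H^*(\Xp\infty)$, whereas any multiple of $\zeta_0$ has first coordinate $0$ since $\zeta_0^\GG=(0,1)$. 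The divisibilities you invoke are a feature of the \emph{finite} projective spaces $\Xpq pq$ (Theorem~\ref{thm:cohomStructure}), where they exist only because $\cwd[p]\cxwd[q]=0$. Had you carried out the splice as written, you would have formally adjoined elements $\zeta_0^{-k}\cld[k]$ and $\zeta_1^{-k}\cxld[k]$, producing a presentation different from the one asserted in the corollary, which lists divisibilities only for the products involving the classes $m_s$. The repair is immediate: use the correct two-relation presentation of the base ring, and your splice then yields exactly the stated corollary.
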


\begin{remark}
One question this raises is: What submanifolds of $\Grpq 23{}$ represent the elements $m_{s}$?
Examining the Pl\"ucker embedding, we can verify that
\begin{align*}
    m_{0} &= [\Grp 23]^* \\
    m_{1} &= [\Xp{}\times\Xpq 2{}]^* \\
    m_{2} &= [\Grpq 22{}]^* \\
    m_{3} &= [\Xp 3\times\Xq{}]^*.
\end{align*}
\end{remark}

\begin{remark}
Another question we can ask is: What are the Euler classes $\cgd = e(\pi\dual)$ and $\cxgd = e(\chi\pi\dual)$
in terms of the generators given by the corollary?
(These are the images of the elements we called $\cwd$ and $\cxwd$ in \cite{CH:bu2}.)
To answer, we note that
$\pi\dual$ has a section whose zero locus is
$\Grpq 22{}$, which represents $m_{2}$. The dimensions are correct to give us
\[
    \cgd = m_{2}.
\]

The bundle $\chi\pi\dual$ has a section whose zero locus is $\Grp 23$,
which represents $m_{0}$. However, this element lies in the wrong grading to
be the Euler class of $\chi\pi\dual$. The problem is in the equivariant
dimension we attach to $\Grp 23$, which could be considered to have any
dimension of the form $4 + n\Omega_1$, because its intersection with
the second component of $\Grpq 23{}^\GG$ is empty.
Reinterpreting the dimension in this way amounts to multiplying by $\zeta_1^n$,
and correcting the dimension leads to
\[
    \cxgd = \zeta_1^2 m_{0}.
\]
The factor of $\zeta_1$ is relatively harmless because $m_{0}$ is infinitely divisible
by $\zeta_1$.

With these identifications, we can recover a relation from \cite{CH:bu2}, which here takes the form
\[
    \cxgd = (1-\kappa)\zeta_0^2 \cgd + e^2 \cxld.
\]
We can derive this as follows:
\begin{align*}
    \cxgd &= \zeta_1^2 m_{0} \\
    &= \zeta_1(\cxld[2] - \zeta_0 m_{1}) \\
    &= \zeta_1\cxld[2] - \xi m_{1} \\
    &= (1-\kappa)\zeta_0\cld\cxld + e^2\cxld - (1-\kappa)\xi m_{1} \mathrlap{\qquad (\text{using that }\kappa\xi = 0)} \\
    &= (1-\kappa)\zeta_0\cld\cxld + e^2\cxld - (1-\kappa)\zeta_0(\cld\cxld - \zeta_0 m_{2}) \\
    &= (1-\kappa)\zeta_0^2 m_{2} + e^2\cxld \\
    &= (1-\kappa)\zeta_0^2\cgd + e^2\cxld.
\end{align*}
\end{remark}

\section{27 lines}\label{sec:27 lines}

A well-known example in enumerative geometry is the result that
a cubic surface in $\Xp 4$ contains 27 lines, the set of which we will refer to as $T$;
see, for example \cite[\S6.2.1]{3264EisenbudHarris}.
In \cite{Braz:equivenumerative}, Brazelton determined the structure of $T$ as an $S_4$-set when the symmetric group $S_4$ acts on $\Cp 4$ as its regular representation,
which then implies results for the subgroups of $S_4$.
Here, as an application of our results,
we use the calculation of the cohomology of $\Grpq 23{}$ in the preceding section to not only reproduce one of
Brazelton's calculations in the case of $\GG$, but to determine a bit more information about
those 27 lines.

So we consider the question of how many lines lie on a smooth $\GG$-invariant cubic surface in
$\Xpq 3{}$.
Such a surface is the zero locus of a homogeneous cubic polynomial $f(a_1,a_2,a_3,b)$
with $(a_1,a_2,a_3,b)\in \Cpq 3{}$.
For the zero locus to be $\GG$-invariant, we want either $tf = f$ or $tf = -f$.
However, if $tf = -f$, then $f$ must have $b$ as a factor,
so its zero locus contains the whole hyperplane $b=0$, hence infinitely many lines.
The interesting case is therefore when $tf = f$.
(Brazelton gives an explicit example of an $S_4$-invariant cubic containing exactly 27 lines,
so the problem is not completely vacuous.)

Such a cubic polynomial is a $\GG$-equivariant linear functional
\[
    f\colon \Sym^3(\Cpq 3{}) \to \C
\]
and, on restricting to 2-planes, we get a section of $\Sym^3(\pi\dual)$ over $\Grpq 23{}$.
Each point in the zero locus of this section gives us a line on the cubic surface defined by $f$,
and this zero locus represents the Euler class of the bundle.
Hence, determining the set of such lines comes down to computing the Euler class
$e(\Sym^3(\pi\dual))$ in the cohomology of $\Grpq 23{}$.

In order to compute $e(\Sym^3(\pi\dual))$, we start by determining the grading in which it lives,
which will be the same as the equivariant dimension of $\Sym^3(\pi\dual)$.
For this we examine the fibers over each of the components of $\Grpq 23{}^\GG$.
Over $\Grp 23$, the fibers have the form
\[
    \Sym^3(\Cp 2) \iso \Cp 4.
\]
Over $\Xp 3\times \Xq{}$, the fibers have the form
\[
    \Sym^3(\Cpq 1{}) \iso \Cpq 22.
\]
This gives us
\[
    \grad e(\Sym^3(\pi\dual)) = \dim \Sym^3(\pi\dual)
    =  2\Omega_1 + 8.
\]
From our computations, a basis for $\HH^{2\Omega_1 + RO(\GG)}(\Grpq 23{})$ 
as a module over $\HS$ consists of the following elements.
\begingroup
\allowdisplaybreaks[0]
\begin{alignat*}{2}
    \text{\textbf{Element}} &\qquad& &\text{\textbf{Grading}} \\
    \zeta_1^2 &&& 2\Omega_1 \\
    \zeta_1\cld &&& 2\Omega_1 + 2 \\
    \cld[2] &&& 2\Omega_1 + 4 \\
    \zeta_1 m_{2} &&& 2\Omega_1 + 4 \\
    \cld m_{2} &&& 2\Omega_1 + 6 \\
    \zeta_0^{-1}\cld\cxld m_{2} &&& 2\Omega_1 + 8
\end{alignat*}
\endgroup
From the structure of $\HS$, we see that
the only one of these that can contribute to grading $2\Omega_1 + 8$ is the last, so we must have
\[
    e(\Sym^3(\pi\dual)) = \alpha \zeta_0^{-1}\cld\cxld m_{2}
\]
for some $\alpha \in A(\GG)$.
To determine $\alpha$, we apply $\rho$ and take fixed points.
From the nonequivariant calculation (which is an easy application of the splitting principle), we know that
\[
    \rho(e(\Sym^3(\pi\dual))) = 27\cd[2]_1 \cd_2 = 27\cd[2] m_{[0]},
\]
where we identify $\cd$ and $m_{[0]}$ from the cohomology of $\Qexp 6$ with
$\cd_1 = c_1(\pi\dual)$ and $\cd_2 = c_2(\pi\dual)$, respectively, in the cohomology of $\Grp 24$.
But
\[
    \rho(\alpha \zeta_0^{-1}\cld\cxld m_{2}) = \rho(\alpha) \cd[2] m_{[0]},
\]
hence $\rho(\alpha) = 27$.

To determine $e(\Sym^3(\pi\dual))^\GG = e(\Sym^3(\pi\dual)^\GG)$,
we first look at the restriction of $\Sym^3(\pi\dual)$ to $\Grp 23 \subset \Grpq 23{}^\GG$,
where the action of $\GG$ on the bundle is trivial, hence
\[
    \Sym^3(\pi\dual)^\GG | \Grp 23 = \Sym^3(\pi\dual | \Grp 23)
\]
and its Euler class
is the restriction of its Euler class from $\Grp 24$, $27c_1^2c_2$,
which is 0 in the cohomology of $\Grp 23$.

When we restrict $\pi\dual$ to the other component, $\Xp 3\times\Xq{}\subset \Grpq 23{}^\GG$, it splits as
$\eta\dirsum\Cq{}$, where $\eta$ is the dual of the tautological line bundle over $\Xp 3$.
Hence the restriction of the symmetric power is
\[
    \Sym^3(\pi\dual)|\Xp 3\times\Xq{} \iso 
    \eta^{\tensor 3} \dirsum (\eta^{\tensor 2}\tensor \Cq{})
    \dirsum \eta \dirsum \Cq{}
\]
and its fixed-set bundle is
\[
    \Sym^3(\pi\dual)^\GG|\Xp 3\times\Xq{} \iso
    \eta^{\tensor 3} \dirsum \eta,
\]
whose (nonequivariant) Euler class is $3\cd\cdot \cd = 3\cd[2]$.

So we've calculated
\[
    e(\Sym^3(\pi\dual))^\GG = (0, 3\cd[2]).
\]
Now
\[
    (\alpha \zeta_0^{-1}\cld\cxld m_2)^\GG = \alpha^\GG (0, \cd[2]),
\]
so $\alpha^\GG = 3$. 
Putting that together with $\rho(\alpha) = 27$, we see that
\[
    \alpha = 3 + 12g,
\]
so
\begin{align*}
    e(\Sym^3(\pi\dual)) &= (3 + 12g)\zeta_0^{-1}\cld\cxld m_2 \\
    &= 12[\GG]^* + 3[\Xp{}\times\Xq{}]^*
\end{align*}
where the point $\Xp{}\times\Xq{} \in \Grpq 23{}$ represents $\zeta_0^{-1}\cld\cxld m_{2}$.
That is, the Euler class $e(\Sym^3(\pi\dual))$
is represented by 3 fixed points in the component $\Xp 3\times\Xq{}$
plus 12 free $\GG$-orbits in $\Grpq 23{}$. 

Now we can answer question (\ref{27 lines question}) that we posed in the introduction, and express $T$
as a sum of the fundamental classes of the four types of equivariant lines displayed in Figure \ref{fig:lines}:
$$[T]^*=0[\Xp 2]^*+3[\Xpq 1{}]^*+0[\Xq 2]^*+12[C_2\times \Xp 2]^*.$$
So, of the 27 lines on an even cubic in $\Xpq 3{}$, 24 form 12 pairs of lines interchanged by
the action of $\GG$, while three are $\GG$-invariant lines of the form $\Xpq 1{}$,
that is, while $\GG$-invariant, they have nontrivial $\GG$-action.

We can also say that, considered just as a $\GG$-set of points from $\Grpq 23{}$, the set $T$ is
\[
    12[\GG/e] + 3[\GG/\GG],
\]
but we have more information than that, because we know
in which component of $\Grpq 23{}^\GG$ the points corresponding to the three $\GG$-invariant lines lie,
hence how $\GG$ acts on those lines.

\subsection{Comparison with Brazelton’s result}
This gives the same $\GG$-set as found by Brazelton in \cite{Braz:equivenumerative},
for the action of the subgroup he calls $C_2^o$, which is $\{e,(1\ 2)\} < S_4$.
Note that his action of $C_2^o$ on $\Cp 4$ fixes two summands and interchanges
the other two. However, a simple change of basis identifies this with
the $\GG$ representation $\Cpq 3{}$.

In his Table~1, Brazelton shows the resulting $C_2^o$-set
of lines on the cubic as $12[\GG/e] + 3[\GG/\GG]$ as above, but does not identify
the nature of the three $C_2^o$-invariant lines.
Brazelton's approach is roughly to show that any equivariant cohomology theory
can be extended so that the cohomology of $X$ is graded on $KO_G(X)$ and that doing so then provides
Euler classes for some complex vector bundles when the cohomology theory
is \emph{complex oriented}, such classes a priori depending on a choice of section. 
He also defines pushforwards and uses these to
define Euler numbers, which are elements
of the Burnside ring $A(G)$, and shows their invariance with respect to choice of the section
used to define the Euler class.
By explicit example he finds the action of $S_4$ on a set of 27 lines on an
$S_4$-invariant cubic, and concludes from the invariance of Euler numbers
that any cubic will have the same $S_4$-set of lines.

Brazelton's approach does not apply directly to ordinary cohomology because,
as he points out, it is not a complex oriented theory in the sense in which
that term is usually used.
The extended grading we use is, however, closely related to the one that Brazelton uses.
His approach could be applied to $RO(G)$-graded ordinary cohomology to give a
$KO_G(X)$-graded theory. This maps to our version via the map
$KO_G(X)\to RO(\Pi X)$ that gives the equivariant dimension of a bundle;
this map is generally not surjective, so
ordinary cohomology can have a larger grading than that approach provides.
More important is that ordinary cohomology has the full machinery of Thom isomorphisms
and Poincar\'e duality, so has Euler classes and pushforwards in more
generality. 

To see then how our calculation of the Euler class relates
to Brazelton's calcluation of an Euler number,
we note that, in discussing how the Euler class $e(\Sym^3(\pi\dual))$ is represented,
we are really looking at its Poincar\'e dual in homology.
For convenience of interpretation, we would like that dual to be in the 0th homology group,
but the grading is not quite right. This can be easily fixed:
The class $e(\Sym^3(\pi\dual))$, being a multiple of $\cld m_{2}$,
is divisible by $\zeta_0$, so multiplying or dividing by $\zeta_0$ is
relatively harmless---it amounts to reinterpreting the equivariant dimension of
the representing manifold.
In identifying the manifold, really collection of points, representing $e(\Sym^3(\pi\dual))$, we can
therefore think instead of
the Poincar\'e dual of $\zeta_0^2 e(\Sym^3(\pi\dual))$, which lies in $H^\GG_0(\Grpq 23{})$, 
the group generated by the monoid of
homotopy classes of $\GG$-sets mapping to $\Grpq 23{}$, with addition being disjoint union.
This is where we identified the Euler class as corresponding to the set 
$12[\GG] + 3[\Xp{}\times\Xq{}]$ mapping to $\Grpq 23{}$.
Projecting $\Grpq 23{}$ to a point induces the map $H^\GG_0(\Grpq 23{}) \to A(\GG)$
giving the underlying $\GG$-set and forgetting the map into $\Grpq 23{}$,
and this recovers the Euler number that Brazelton calculated
as $12[\GG/e] + 3[\GG/\GG]$.

\appendix

\section{Equivariant ordinary cohomology}

\subsection{Ordinary cohomology}\label{app:ordinarycohomology}
In this paper
we use $\GG$-equivariant ordinary cohomology with the extended grading developed in \cite{CostenobleWanerBook}.
This is Bredon's ordinary cohomology extended to be graded on representations of the fundamental groupoids
of $\GG$-spaces.
We review here some of the notation and computations we use.
More detailed descriptions of this theory can be found in \cite{CHTFiniteProjSpace}
and \cite{Beaudry:Guide}.

For a $\GG$-space $Y$ over $X$, we write $H_\GG^{RO(\Pi X)}(Y;\Mackey T)$ for the
ordinary cohomology of $Y$ with coefficients in a Mackey functor $\Mackey T$, graded
on $RO(\Pi X)$, the representation ring of the fundamental groupoid of $X$.
Throughout this paper we use the Burnside ring Mackey functor $\Mackey A$ as the coefficients,
and write simply $H_\GG^{RO(\Pi X)}(Y)$.
Note also that, for simplicity of notation, we use unreduced cohomology where in other
papers we have used reduced cohomology.

In \cite{CostenobleWanerBook} and \cite{CHTFiniteProjSpace} we considered cohomology itself to be
Mackey functor--valued, and this will be an important point of view again in \cite{CH:QuadricsII}.
There are several ways of thinking of the Mackey functor--valued theory
$\Mackey H_\GG^{RO(\Pi X)}(Y)$. We define
\[
    \Mackey H_\GG^{RO(\Pi X)}(Y)(\GG/H) = \HH^{RO(\Pi X)}(\GG/H\times Y)
\]
for $H = \GG$ or the trivial subgroup $e$. Considering $r\colon \GG/e\times Y\to Y$, restriction is then
the induced map
\[
    \rho = r^*\colon \Mackey H_\GG^{RO(\Pi X)}(Y)(\GG/\GG) \to \Mackey H_\GG^{RO(\Pi X)}(Y)(\GG/e),
\]
while
\[
    \tau = r_!\colon \Mackey H_\GG^{RO(\Pi X)}(Y)(\GG/e) \to \Mackey H_\GG^{RO(\Pi X)}(Y)(\GG/\GG)
\]
can be thought of, depending on your predilictions, 
as either the transfer map or the pushforward map associated with $r$.
But this hides somewhat the simplicity of the situation, because we have
\[
    \Mackey H_\GG^{\alpha}(Y)(\GG/e) = \HH^\alpha(\GG/e\times Y) \iso H^{|\alpha|}(Y;\Z),
\]
where $|\alpha|$ is the integer rank of the representation $\alpha$.
So $\Mackey H_\GG^{RO(\Pi X)}(Y)(\GG/e)$ is simply the nonequivariant cohomology of $Y$
regraded on $RO(\Pi X)$ via the rank function $|{-}|\colon RO(\Pi X)\to \Z$.

In this paper we concentrate on the groups $\HH^{RO(\Pi X)}(Y)$ at level $\GG/\GG$.
However, we will use the maps $\rho$ and $\tau$ extensively,
thinking of them as connecting the equivariant and nonequivariant cohomologies,
rather than as the structure maps in the Mackey functor--valued theory.
This is simply a matter of emphasis.


For all $X$ and $Y$, $H_\GG^{RO(\Pi X)}(Y)$ is a graded module over 
\[
    \HS =  H_\GG^{RO(\GG)}(S^0),
\]
the cohomology of a point.
The grading on the latter is just $RO(\GG)$, the real representation ring of $\GG$, which is free abelian
on $1$, the class of the trivial representation $\R$, and $\sigma$, the class of the
sign representation $\R^\sigma$. The cohomology of a point was calculated by Stong in an unpublished
manuscript and first published by Lewis in \cite{LewisCP}. We can picture the calculation as 
in Figure~\ref{fig:cohompt},
in which a group in grading $a+b\sigma$ is plotted at the point $(a,b)$, and the spacing of the grid lines
is 2 (which is more conventient for other graphs we will give).
The square box at the origin is a copy of $A(\GG)$, the Burnside ring of $\GG$,
closed circles are copies of $\Z$, and open circles are copies of $\Z/2$.
\begin{figure}
\qquad\qquad\qquad
\begin{tikzpicture}[x=4mm, y=4mm]
	\draw[step=2, gray, very thin] (-7.8, -7.8) grid (7.8, 7.8);
	\draw[thick] (-8, 0) -- (8, 0);
	\draw[thick] (0, -8) -- (0, 8);
    \node[right] at (8,0) {$a$};
    \node[above] at (0,8) {$b\sigma$};
    \fill (9.5,5.7) rectangle (10.1,6.3); \node[right] at (10,6) {$= A(\GG)$};
    \fill (9.8,4.8) circle (0.2); \node[right] at (10,4.8) {$= \Z$};
    \draw (9.8,3.6) circle (0.2); \node[right] at (10,3.6) {$= \Z/2$};

    \fill (-0.3, -0.3) rectangle (0.3, 0.3);
    \fill (0, -7) circle(0.2);
    \fill (0, -6) circle(0.2);
    \fill (0, -5) circle(0.2);
    \fill (0, -4) circle(0.2);
    \fill (0, -3) circle(0.2);
    \fill (0, -2) circle(0.2);
    \fill (0, -1) circle(0.2);
    \fill (0, 1) circle(0.2);
    \fill (0, 2) circle(0.2);
    \fill (0, 3) circle(0.2);
    \fill (0, 4) circle(0.2);
    \fill (0, 5) circle(0.2);
    \fill (0, 6) circle(0.2);
    \fill (0, 7) circle(0.2);

    \fill (-2, 2) circle(0.2);
    \fill (-4, 4) circle(0.2);
    \fill (-6, 6) circle(0.2);

    \draw[fill=white] (-2, 3) circle(0.2);
    \draw[fill=white] (-2, 4) circle(0.2);
    \draw[fill=white] (-2, 5) circle(0.2);
    \draw[fill=white] (-2, 6) circle(0.2);
    \draw[fill=white] (-2, 7) circle(0.2);
    \draw[fill=white] (-4, 5) circle(0.2);
    \draw[fill=white] (-4, 6) circle(0.2);
    \draw[fill=white] (-4, 7) circle(0.2);
    \draw[fill=white] (-6, 7) circle(0.2);

    \fill (2, -2) circle (0.2);
    \fill (4, -4) circle (0.2);
    \fill (6, -6) circle (0.2);
    \draw[fill=white] (3, -3) circle(0.2);
    \draw[fill=white] (5, -5) circle(0.2);
    \draw[fill=white] (7, -7) circle(0.2);
    
    \draw[fill=white] (3, -4) circle(0.2);
    \draw[fill=white] (3, -5) circle(0.2);
    \draw[fill=white] (3, -6) circle(0.2);
    \draw[fill=white] (3, -7) circle(0.2);
    \draw[fill=white] (5, -6) circle(0.2);
    \draw[fill=white] (5, -7) circle(0.2);

    \node[right] at (0,1) {$e$};
    \node[below left] at (-2,2) {$\xi$};
    \node[above right] at (2,-2) {$\tau(\iota^{-2})$};
    \node[left] at (0,-1) {$e^{-1}\kappa$};

\end{tikzpicture}
\caption{$\HS^{RO(\GG)}$}\label{fig:cohompt}
\end{figure}

Recall that $A(\GG)$ is the Grothendieck group of finite $\GG$-sets, with multiplication given by products of sets.
Additively, it is free abelian on the classes of the orbits of $\GG$, for which we write
$1 = [\GG/\GG]$ and $g = [\GG/e]$. The multiplication is given by $g^2 = 2g$.
We also write $\kappa = 2 - g$. Other important elements are shown in the figure:
The group in degree $\sigma$ is generated by an element $e$,
while the group in degree $-2 + 2\sigma$ is generated by an element $\xi$.
The groups in the second quadrant are generated by the products $e^m\xi^n$, with $2e\xi = 0$.
We have $g\xi = 2\xi$ and $ge = 0$.
The groups in gradings $-m\sigma$, $m\geq 1$, are generated by elements $e^{-m}\kappa$, so named
because $e^m\cdot e^{-m}\kappa = \kappa$. We also have $ge^{-m}\kappa = 0$.

To explain $\tau(\iota^{-2})$, we think for moment about the nonequivariant cohomology
of a point. If we grade it on $RO(\GG)$, we get
$H^{RO(\GG)}(S^0;\Z) \iso \Z[\iota^{\pm 1}]$, where $\deg \iota = -1 + \sigma$.
(Nonequivariantly, we cannot tell the difference between $\R$ and $\R^\sigma$.)
We have $\rho(\xi) = \iota^2$ and $\tau(\iota^2) = g\xi = 2\xi$.
Note also that $\tau(1) = g$.
In the fourth quadrant we have that the group in grading $n(1-\sigma)$, $n\geq 2$, is
generated by $\tau(\iota^{-n})$.
The remaining groups in the fourth quadrant will not concern us here.
For more details, see \cite{Co:InfinitePublished} or \cite{CHTFiniteProjSpace}.

\subsection{The cohomology of projective spaces}\label{app:cohomologyprojective}
We summarize the cohomology of $\Xpq pq$ as calculated in \cite{CHTFiniteProjSpace}.

Write $BU(1) = \Xpq \infty\infty$.
Its fixed set is
\[
    BU(1)^\GG = \Xp \infty \disjunion \Xq \infty = B^0 \disjunion B^1,
\]
where we use the indices 0 and 1 to evoke the trivial and nontrivial representation
of $\GG$, respectively.
(We use this convention throughout, that an index 0 refers to something related to
$B^0$ and an index 1 refers to something related to $B^1$.)
Because $BU(1)$, $B^0$, and $B^1$ are all nonequivariantly simply connected,
representations of $\Pi BU(1)$ are determined by their restrictions to $B^0$ and $B^1$,
which are elements of $RO(\GG)$ that must have the same nonequivariant rank and
the same parity for the ranks of their fixed point representations.
That is, we can think of an element $\alpha\in RO(\Pi BU(1))$ as a pair
\[
    \alpha = (\alpha_0,\alpha_1) \in RO(\GG)\dirsum RO(\GG),
\]
where the ranks $|\alpha_0| = |\alpha_1|$
while $|\alpha_0^\GG| \equiv |\alpha_1^\GG| \pmod 2$.
For example, we have the representations
\begin{align*}
    \Omega_0 &= (2\sigma - 2, 0) \\
\intertext{and}
    \Omega_1 &= (0, 2\sigma - 2).
\end{align*}
As shown in \cite[\S2.2, p.13]{CHTFiniteProjSpace}, 
$RO(\Pi BU(1))$ is free abelian on three generators, but we find
the following presentation most useful:
\[
    RO(\Pi BU(1)) = \Z\{1,\sigma,\Omega_0,\Omega_1\}/\langle \Omega_0 + \Omega_1 = 2\sigma - 2\rangle.
\]
For brevity of notation we shall write
\[
    \HH^\gr(X) = \HH^{RO(\Pi BU(1))}(X)
\]
for any $\GG$-space over $BU(1)$.

Let $\omega$ denote the tautological line bundle over $BU(1)$, let $\omega\dual$ be its dual bundle, let
$\chi\omega = \omega\tensor_\C \C^\sigma$, and let $\chi\omega\dual$ be the dual of $\chi\omega$.
We will also use the notation from algebraic geometry in which 
$O(n) = (\omega\dual)^{\tensor n}$, so
$\omega = O(-1)$, $\omega\dual = O(1)$, $\chi O(-1) = \chi\omega$, and $\chi O(1) = \chi\omega\dual$.

Associated to any vector bundle over $BU(1)$ is a representation in $RO(\Pi BU(1))$ that we think of as the
equivariant rank or dimension of the bundle; this representation is given by the fiber representations
over $B^0$ and $B^1$. In the case of $\omega$ and $\chi\omega$, we have
\begin{align*}
    \omega &= 2 + \Omega_1 \\
    \chi\omega &= 2 + \Omega_0,
\end{align*}
where we write $\omega$ and $\chi\omega$ again for the associated elements of $RO(\Pi BU(1))$.

One of the main reasons for extending the grading of ordinary cohomology to $RO(\Pi X)$ is that
it then has a Thom isomorphism, so a vector bundle has an Euler class, whose grading is the dimension of the bundle.
For example, $\omega\dual$ and $\chi\omega\dual$ have Euler classes
\begin{align*}
    \cwd &\in \HH^{\omega}(BU(1)) = \HH^{2 + \Omega_1}(BU(1)) \\
\intertext{and}
    \cxwd &\in \HH^{\chi\omega}(BU(1)) = \HH^{2 + \Omega_0}(BU(1)).
\end{align*}

The cohomology of $BU(1)$ was calculated in \cite{Co:InfinitePublished} as follows.

\begin{theorem}[{\cite[Theorem 11.3]{Co:InfinitePublished}}]
$H_{\GG}^\gr(BU(1))$ is an algebra over $\HS$ generated by the 
Euler classes $\cwd$ and $\cxwd$ together with classes $\zeta_0$ and $\zeta_1$. 
These elements live in gradings
\begin{alignat*}{4}
 \grad\cwd &= \omega  \qquad&  \grad\cxwd &= \chiw  \\
 \grad\zeta_1 &= \omega - 2  \qquad& \grad\zeta_0 &= \chiw-2 
\end{alignat*}
They satisfy the relations
\begin{align*}
		\zeta_0 \zeta_1 &= \xi \\
        \mathllap{\text{and}\quad}\zeta_1 \cxwd &= (1-\kappa)\zeta_0 \cwd + e^2,
\end{align*}
and these relations completely determine the algebra.
Moreover, $H_{\GG}^\gr(BU(1))$ is free as a module over $\HS$.
\qed
\end{theorem}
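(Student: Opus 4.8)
The plan is to compute $\HH^\gr(BU(1))$ by writing $BU(1) = \Xpq{\infty}{\infty}$ as the colimit of the finite projective spaces $\Xpq pq$ and building their cohomology up inductively through the cofiber sequences of the cell filtration, using as detecting tools the restriction $\rho$ to the nonequivariant cohomology of the underlying $\CP^\infty$ and the fixed-point map $(-)^\GG$ to $H^*(\Xp\infty \disjunion \Xq\infty)$. First I would pin down the four proposed generators geometrically: the Euler classes $\cwd$ and $\cxwd$ of $\omega\dual$ and $\chiw\dual$, in gradings $\omega$ and $\chiw$, supplied by the Thom isomorphism, and the classes $\zeta_0$, $\zeta_1$ in gradings $\Omega_0 = \chiw - 2$ and $\Omega_1 = \omega - 2$, which restrict to $1$ under $\rho$ but record the splitting of the fixed set into its two components under $(-)^\GG$. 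This immediately gives an $\HS$-algebra map $\HS[\cwd,\cxwd,\zeta_0,\zeta_1] \to \HH^\gr(BU(1))$.

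For the additive structure I would exploit that each inclusion $\Xpq pq \includesin \Xpq{(p+1)}q$ (and symmetrically for $q$) has as cofiber a Thom space over a lower projective space, exactly the situation of Proposition~\ref{prop:xo2 cofibration}. The resulting long exact sequences split into short exact sequences of free $\HS$-modules by the same mechanism as in Proposition~\ref{prop:xo2 splitting}: in each coset $n\Omega_1 + RO(\GG)$ one checks that restriction onto the smaller projective space hits an explicit monomial basis, so the sequence splits. Taking the colimit then yields that $\HH^\gr(BU(1))$ is free over $\HS$ on monomials in the four generators, organized so that $\zeta_0$-divisibility governs one half of the grading plane and $\zeta_1$-divisibility the other.

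With the basis in hand I would verify the two relations by restriction. Both $\zeta_0\zeta_1 = \xi$ and $\zeta_1\cxwd = (1-\kappa)\zeta_0\cwd + e^2$ sit in a single $RO(\GG)$ coset (gradings $\Omega_0+\Omega_1 = 2\sigma-2$ and $2\sigma$ respectively), so one verifies each by evaluating $\rho$ and $(-)^\GG$ on both sides and matching against the structure of $\HS$. The correction terms are dictated precisely because $e$ lies in the kernel of $\rho$ while being seen by $(-)^\GG$, and $1-\kappa$ adjusts the Burnside-ring coefficient on the fixed components; tracking both maps against Figure~\ref{fig:cohompt} forces exactly the coefficients $(1-\kappa)$ and $e^2$. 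Completeness of the relations then follows by comparing $\HS[\cwd,\cxwd,\zeta_0,\zeta_1]/(\zeta_0\zeta_1-\xi,\ \zeta_1\cxwd - (1-\kappa)\zeta_0\cwd - e^2)$ against the computed additive answer coset by coset, checking that the relations reduce every monomial into the established free basis.

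The hard part is this last completeness-and-freeness step, and specifically its entanglement with the non-free part of $\HS$. In the mixed gradings the cohomology contains infinitely divisible classes such as $\zeta_0^{-k}\cwd[n]$, and the torsion families of $\HS$ — the $e^{-m}\kappa$ and $\tau(\iota^{-n})$ classes visible in Figure~\ref{fig:cohompt} — mean that $\rho \dirsum (-)^\GG$ fails to be injective, so no single restriction argument can certify the answer. The real labor, carried out in \cite{Co:InfinitePublished}, is to run the inductive cofiber-sequence computation with enough bookkeeping to locate these divided and torsion classes precisely and to confirm, grading by grading, that no generators or relations beyond the listed ones appear.
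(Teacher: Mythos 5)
The first thing to note is that this paper does not prove the statement at all: it is quoted in the Appendix, with attribution, as Theorem~11.3 of \cite{Co:InfinitePublished}, and is stated there without proof as imported background. So the only meaningful comparison is with the proof in that cited paper, whose overall shape (inductive computation over the finite projective spaces $\Xpq pq$ via cofiber sequences, with $\rho$ and $(-)^\GG$ as detection tools, followed by passage to the limit) your first two paragraphs do roughly reconstruct. The difficulty is that your proposal then outsources exactly the part that constitutes the theorem. The content of the statement is the completeness of the two relations and the freeness over $\HS$; your final paragraph concedes that this ``real labor'' is ``carried out in \cite{Co:InfinitePublished}.'' A proof attempt that defers its decisive step to the very reference being proved is a plan, not a proof, and the gap is not a formality: your third paragraph proposes to verify the relations by evaluating $\rho$ and $(-)^\GG$, while your fourth paragraph correctly notes that $\rho\dirsum(-)^\GG$ is not injective (it kills the classes $e^m\xi^n$ with $m,n\geq 1$, for instance), so the relation-checking and the additive computation have to be interleaved grading by grading in an order your sketch never pins down.

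There is also a concrete error. You assert that $\HH^\gr(BU(1))$ contains infinitely divisible classes such as $\zeta_0^{-k}\cwd[n]$ in the mixed gradings. It does not: such classes live only in the finite projective spaces $\Xpq pq$, where they arise from the truncation $\cwd[p]\cxwd[q]=0$, and their absence from $BU(1)$ is stated explicitly in the proof of Lemma~\ref{lem:basis} of this paper (``elements like $\zeta_0^{-1}\cwd[s]$ \dots\ are not present in the cohomology of $BU(1)$'') and is implicit in the presentation asserted by the theorem itself, since a free $\HS$-module with the stated polynomial-type presentation has no room for them. This matters for your strategy, because the passage from the finite stages to $BU(1)$ --- a Milnor $\lim^1$ argument your sketch omits entirely --- is precisely where one must see that the divisible classes fail to be compatible under restriction and so disappear in the limit. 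As written, your outline would, if anything, predict the wrong answer for $BU(1)$.
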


The freeness stated in the last part of the theorem is a phenomenon
that occurs in a large class of examples, as shown
by Ferland and Lewis in \cite{FerlandLewis}.
See also \cite{KronholmFree}, \cite{HogleMayFreeness}, and \cite{DuggerHazelMayModules}.

There are two restriction maps we will use,
\begin{align*}
    \rho\colon H_\GG^{\alpha}(BU(1)) &\to H^{\alpha}(BU(1)),
\intertext{restriction to nonequivariant cohomology, and}
    (-)^\GG\colon H_\GG^\alpha(BU(1)) &\to H^{\alpha_0^\GG}(B^0) \dirsum H^{\alpha_1^\GG}(B^1),
\end{align*}
the fixed-point map. 
In the case of $\rho$, we are considering nonequivariant cohomology to
be graded on $RO(\Pi BU(1))$ via the forgetful or rank homomorphism
\[
    RO(\Pi_\GG BU(1))\to RO(\Pi_e BU(1)) \iso \ZZ,
\]
which is just the map $\alpha\mapsto |\alpha|$.
When we impose this regrading, $H^\gr(BU(1))$ acquires two invertible elements,
\[
    \iota \in H^{\sigma-1}(BU(1)) \qquad\text{and}\qquad \zeta\in H^{\omega-2}(BU(1))
\]
expressing the fact that $\sigma - 1$ and $\omega-2$ generate the kernel
of the rank homomorphism.

The restriction maps are ring maps and their values on the multiplicative generators are
\begin{equation}\label{eqn:restrictions}
\begin{aligned}
    \rho(\zeta_0) &= \iota^2\zeta^{-1} &  \zeta_0^\GG &= (0,1) \\
    \rho(\zeta_1) &= \zeta & \zeta_1^\GG &= (1,0) \\
    \rho(\cwd) &= \zeta \cd & \cwd[\GG] &= ( \cd, 1) \\
    \rho(\cxwd) &= \iota^2\zeta^{-1} \cd \qquad\qquad& \cxwd[\GG] &= (1,  \cd).
\end{aligned}
\end{equation}
Here, $ \cd \in H^2(BU(1))$ denotes the first nonequivariant Chern class of $\omega\dual = O(1)$.
There are similar restriction maps
\begin{align*}
    \rho\colon \HS^\alpha &\to H^{\alpha}(S^0), \\
    (-)^\GG\colon \HS^\alpha &\to H^{\alpha^\GG}(S^0).
\end{align*}
The values on the most interesting elements are
\begin{equation}\label{eqn:restrpoint}
\begin{aligned}
    \rho(\xi^k) &= \iota^{2k} & \rho(\tau(\iota^{2k})) &= 2\iota^{2k} & \rho(e^{-k}\kappa) &= 0 & \rho(e^k) &= 0 \\
    (\xi^k)^\GG &= 0 & \tau(\iota^{2k})^\GG &= 0 & (e^{-k}\kappa)^\GG &= 2 & (e^k)^\GG &= 1.
\end{aligned}
\end{equation}

We also have the Frobenius relation
\begin{equation}\label{eqn:Frobenius}
    b\tau(a)=\tau(\rho(b)a)
\end{equation}
for $b\in \HH^\gr(BU(1))$ and $a\in H^\gr(BU(1))$,
which is very useful for calculations involving $\tau$.

Moving now to the finite projective space $\Xpq pq$,
we think of it as a space over $BU(1)$ by the evident inclusion
$\Xpq pq \includesin \Xpq\infty\infty$.
This inclusion induces an isomorphism $RO(\Pi BU(1)) \iso RO(\Pi\Xpq pq)$
when $p, q > 0$ and an epimorphism $RO(\Pi BU(1)) \onto RO(\Pi\Xpq pq)$ if either is zero,
so we will grade the cohomology of $\Xpq pq$ on $RO(\Pi BU(1))$ for all $p$ and $q$.
The elements $\cwd$, $\cxwd$, $\zeta_0$, and $\zeta_1$ pull back to elements in
$\HH^\gr(\Xpq pq)$ we will call by the same names.
In \cite{CHTFiniteProjSpace}, we showed the following, though stated somewhat differently.



\begin{theorem}[{\cite[Theorem A]{CHTFiniteProjSpace}}]\label{thm:cohomStructure}
Let $0 \leq p < \infty$ and $0 \leq q < \infty$ with $p+q > 0$.
Then $H_{\GG}^\gr(\Xpq{p}{q})$ is a free module over $\HS$.
As a (graded) commutative algebra over $\HH^\gr(BU(1))$,
it is generated by elements $\zeta_0^{-k}\cwd[p]$ and $\zeta_1^{-k}\cxwd[q]$
for $k\geq 1$, subject to the relations
\begin{align*}
    \zeta_0\cdot \zeta_0^{-k}\cwd[p] &= \zeta_0^{-(k-1)}\cwd[p] && k\geq 1 \\
    \zeta_1\cdot \zeta_1^{-k}\cxwd[q] &= \zeta_1^{-(k-1)}\cxwd[q] && k\geq 1 \\
    \cwd[p]\cxwd[q] &= 0.
\end{align*}
\qed
\end{theorem}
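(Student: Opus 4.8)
The plan is to compute $\HH^\gr(\Xpq pq)$ by induction on the complex dimension $p+q$, exploiting the cell structure of projective space and comparing throughout to the free algebra $\HH^\gr(BU(1))$, whose structure is already known by \cite[Theorem~11.3]{Co:InfinitePublished}. For the geometric input I would fix a flag adapted to the splitting $\Cp p\dirsum\Cq q$ and delete a trivial hyperplane: the complement of $\Xpq{p-1}{q}$ inside $\Xpq{p}{q}$ is equivariantly contractible, being the representation cell of lines not lying in that hyperplane. This yields a cofibration
\[
    \Xpq{p-1}{q}{}_+ \longrightarrow \Xpq{p}{q}{}_+ \longrightarrow S^{W},
\]
with $W\in RO(\Pi BU(1))$ the equivariant dimension of the cell, and hence a long exact sequence tying $\HH^\gr(\Xpq pq)$ to $\HH^\gr(\Xpq{p-1}q)$ and to $\HH^\alpha(S^W)\iso\HS^{\alpha-W}$. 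The base cases ($p+q=1$) are single points with trivial or sign action, whose cohomology is $\HS$.

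Next I would establish freeness and the additive basis. The inductive claim is that each long exact sequence breaks into short exact sequences
\[
    0 \to \HS^{\alpha-W} \to \HH^\alpha(\Xpq pq) \to \HH^\alpha(\Xpq{p-1}q) \to 0,
\]
that is, that the connecting homomorphism vanishes. Since the quotient term is free over $\HS$ by the inductive hypothesis, each such sequence splits, so $\HH^\gr(\Xpq pq)$ is again free, which is the mechanism behind the freeness asserted in the theorem and is in line with the framework of \cite{FerlandLewis}. To force the connecting map to vanish I would show the restriction $\HH^\gr(\Xpq pq)\to\HH^\gr(\Xpq{p-1}q)$ is onto by lifting an explicit $\HS$-basis grading-by-grading, detecting classes with the restrictions $\rho$ (to nonequivariant cohomology) and $(-)^\GG$ (to the fixed sets $\Xp p\disjunion\Xq q$), whose values on $\zeta_0,\zeta_1,\cwd,\cxwd$ are recorded in~(\ref{eqn:restrictions}).

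With the additive structure in hand I would read off the multiplicative generators and relations. The images of $\zeta_0,\zeta_1,\cwd,\cxwd$ under restriction from $\HH^\gr(BU(1))$ supply one family of basis elements, and the relation $\cwd[p]\cxwd[q]=0$ holds for dimensional reasons, since that product lives in a grading of complex codimension $p+q$, exceeding $\dim_\C\Xpq pq = p+q-1$. The remaining basis elements are the genuinely new, infinitely divisible classes $\zeta_0^{-k}\cwd[p]$ and $\zeta_1^{-k}\cxwd[q]$ coming from the bottom cells of the splittings; I would identify them explicitly and verify the divisibility relations $\zeta_0\cdot\zeta_0^{-k}\cwd[p]=\zeta_0^{-(k-1)}\cwd[p]$ and its $\zeta_1$-analogue directly. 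Completeness of the relations would be checked by comparing, in each coset $n\Omega_1+RO(\GG)$ of the grading, the $\HS$-rank of the presented algebra with that of the free module just computed.

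The step I expect to be the main obstacle is the vanishing of the connecting homomorphisms, equivalently the freeness and splitting. The ring $\HS$ is far from a principal ideal domain: it carries the Burnside ring $A(\GG)$ at the origin together with two infinite families of $\Z/2$'s, the classes $e^m\xi^n$ in the positive cone and $e^{-m}\kappa$ in the negative cone. An a priori danger is that a connecting map lands nontrivially in this torsion, producing nonsplit extensions rather than a free answer. Ruling this out, while simultaneously pinning down precisely which divided classes $\zeta_0^{-k}\cwd[p]$ and $\zeta_1^{-k}\cxwd[q]$ occur and no spurious extra divisibilities, is the delicate bookkeeping on which the clean presentation rests; the maps $\rho$ and $(-)^\GG$ are the principal tools, since a class annihilated by both is already sharply constrained by the structure of $\HS$.
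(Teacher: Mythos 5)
First, a structural point: this paper never proves Theorem~\ref{thm:cohomStructure}; it is quoted without proof from \cite[Theorem~A]{CHTFiniteProjSpace}, so your proposal can only be measured against the known structure of these rings and against the way the same difficulty is handled for quadrics in Section~\ref{sec:main theorem}. Measured that way, it founders at exactly the step you flag as the main obstacle, and the problem is not delicate bookkeeping: the claim that the connecting homomorphisms for the single fixed cofibration $\Xpq{p-1}{q}{}_+\to\Xpq{p}{q}{}_+\to S^{W}$ all vanish --- equivalently, that restriction $\HH^\gr(\Xpq{p}{q})\to\HH^\gr(\Xpq{p-1}{q})$ is onto in every grading --- is \emph{false}. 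The obstruction is the divided classes themselves: $\HH^\gr(\Xpq{p-1}{q})$ contains $\zeta_0^{-k}\cwd[p-1]$, while in $\HH^\gr(\Xpq{p}{q})$ only $\cwd[p]$, not $\cwd[p-1]$, is divisible by $\zeta_0$, so these classes have nothing to lift to. Concretely, take $p=2$, $q=1$ and the grading $\omega-\Omega_0=2\Omega_1+4-2\sigma$, and write $u=\zeta_0^{-1}\cwd\in\HH^{\omega-\Omega_0}(\Xpq{1}{1})$. The group $\HH^{\omega-\Omega_0}(\Xpq{1}{1})$ is free of rank one over $A(\GG)$ on $u$ (it is the image of the pushforward from the fixed point $\Xq{1}$, since $\HS^{4-2\sigma}=\HS^{3-2\sigma}=0$), whereas the bases recorded in the proof of Lemma~\ref{lem:basis} (the case $p=3$, $n=2$, $s=2$) give $\HH^{\omega-\Omega_0}(\Xpq{2}{1})=\Z\,\tau(\iota^{-2})\zeta_1\cwd\dirsum\Z\,e^{-2}\kappa\cwd[2]$. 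Under restriction, the Frobenius relation gives $\tau(\iota^{-2})\zeta_1\cwd=\tau(\iota^{-2}\zeta^{2}\cd)=gu$, and $e^{-2}\kappa\cwd[2]$ restricts to $\kappa u=2u-gu$, since one checks from the relations that $\cwd[2]=e^{2}u$ in $\HH^\gr(\Xpq{1}{1})$ (multiply $\zeta_1\cxwd=(1-\kappa)\zeta_0\cwd+e^{2}$ by $\cwd$ and use $\cwd\cxwd=0$ and $ge=0$). The image of restriction is therefore $2\Z u+\Z gu$, of index two: $u$ does not lift, the connecting map is nonzero, and the short exact sequence you want does not exist in this grading. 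Deleting the other hyperplane $\Xpq{p}{(q-1)}$ instead produces the identical failure on the $\zeta_1$ side.

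The repair is the one this paper itself uses for quadrics, and the pattern of the proof in \cite{CHTFiniteProjSpace}: the subspace must depend on the coset of the grading. In Lemma~\ref{lem:basis} and Proposition~\ref{prop:xo2 splitting} the subspace $\Xpq{s_n}{(p-s_n)}$ varies with $n$ precisely so that, in the coset $n\Omega_1+RO(\GG)$, the basis of its cohomology contains no divided monomials; only then is the relevant restriction surjective, and only coset by coset do the long exact sequences split, after which freeness and the presentation are assembled from these graded pieces. Your single induction on $p+q$ with one fixed cofibration cannot be patched to do this uniformly. A secondary point: your ``dimensional reasons'' for $\cwd[p]\cxwd[q]=0$ also do not work equivariantly, since a free $\HS$-module has plenty of nonzero classes in gradings whose rank exceeds the dimension of the space (all $e^{k}$-multiples, for instance); the correct argument is that $\cwd[p]\cxwd[q]$ is the Euler class of $p\,\omega\dual\dirsum q\,\chi\omega\dual$, which admits a nowhere-vanishing section over $\Xpq{p}{q}$ coming from the tautological inclusion $\omega\subset\Cpq{p}{q}$.
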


As shorthand for the existence of the elements $\zeta_0^{-k}\cwd[p]$, we often
say that ``$\cwd[p]$ is infinitely divisible by $\zeta_0$,''
and similarly for the elements $\zeta_1^{-k}\cxwd[q]$.

\subsection{Pushforwards and representation by singular manifolds}
Because ordinary cohomology has Thom isomorphisms and Poincar\'e duality,
we can define some useful pushforward maps.
We state the following definition for $\GG$-manifolds, but it works
equally well for any compact Lie group $G$.

\begin{definition}
Let $f\colon N\to M$ be a map of closed smooth $\GG$-manifolds with
$\dim N \in RO(\Pi M)$, that is, $\dim N = f^*\gamma$ for some $\gamma\in RO(\Pi M)$;
for our purposes here we choose one such $\gamma$ and call it $\dim N$.
Let $\nu = \dim M - \dim N\in RO(\Pi M)$.
Then the pushforward map
\[
    f_!\colon \HH^\alpha(N) \to \HH^{\alpha + \nu}(M)
\]
is defined for $\alpha\in RO(\Pi M)$ 
by the following diagram, in which the vertical maps are Poincar\'e duality isomorphisms:
\[
    \xymatrix{
        \HH^{\alpha}(N) \ar@{-->}[r]^-{f_!} \ar[d]_\iso & \HH^{\alpha+\dim M - \dim N}(M) \ar[d]^\iso \\
        H^\GG_{\dim N - \alpha}(N) \ar[r]_-{f_*} & H^\GG_{\dim N - \alpha}(M).
    }
\]
\end{definition}

This pushforward map has the usual properties familiar from the nonequivariant case, including
the fact that it is a map of $\HH^{RO(\Pi M)}(M)$-modules, meaning that
\[
    x f_!(y) = f_!(f^*(x)y) \text{ for } x\in \HH^{RO(\Pi M)}(M) \text{ and } y\in \HH^{RO(\Pi M)}(N).
\]

We will use pushforward maps as above in our calculation of the cohomology of quadrics.
They also give a useful way of thinking about cohomology classes geometrically.

\begin{definition}
Let $f\colon N\to M$ be a map of closed smooth $\GG$-manifolds with
$\dim N \in RO(\Pi M)$ and let $\nu = \dim M - \dim N\in RO(\Pi M)$. We define
\[
    [N]^* = f_!(1) \in \HH^{\nu}(M).
\]
\end{definition}

Unwinding the definitions, $[N]^*$ is the cohomology class Poincar\'e dual to $f_*[N] \in H^\GG_{\dim N}(M)$.
In general, not all cohomology classes can be represented in this way, but a refinement of this idea
does work in general. We summarize a more detailed discussion from \cite{CH:geometric},
which we simplify here to discuss only singular manifolds without boundary.

\begin{definition}
A \emph{$\gamma$-dimensional closed singular $\GG$-manifold} of depth $n$ is a space of the form
$X = N \union_{\bndry N} M\phi$ where $(N,\bndry N)$ is a smooth compact $\gamma$-dimensional
$\GG$-manifold with boundary,
$P$ is a $(\gamma-k)$-dimensional closed singular $\GG$-manifold of depth $n-1$ for some $k\geq 0$,
and $M\phi$ is the mapping cylinder of a map $\phi\colon \bndry N \to P$.
The base of this recursive definition is that a singular manifold of depth 0 is
a smooth manifold.
We will use the notation $(X;P)$ for a singular manifold $X$ with singular part $P$
when we need to be so explicit.
\end{definition}

\begin{definition}
A singular $\GG$-manifold \emph{with codimension-2 singularities} is one of dimension $\gamma$ in which the singular
part $P$ has dimension $\gamma-k$ for some $k\geq 2$.
\end{definition}

The point of this last requirement is that, if $M = N\union_{\bndry N} M\phi$ is a
$\gamma$-dimensional singular $\GG$-manifold with codimension-2 singularities, then
we have an isomorphism
\[
    H^\GG_\gamma(M) \iso H^\GG_\gamma(N,\bndry N).
\]
This allows us to define a fundamental class $[M]\in H^\GG_\gamma(M)$ to be the element corresponding to
the fundamental class $[N,\bndry N]\in H^\GG_\gamma(N,\bndry N)$ that exists
because $(N,\bndry N)$ is a smooth $\GG$-manifold.
If $f\colon M\to Q$ is a map from $M$ to a smooth closed $\GG$-manifold $Q$ and
$\dim M\in RO(\Pi Q)$, we can define the pushforward again by the following diagram.
\[
    \xymatrix{
        \HH^{\alpha}(M) \ar@{-->}[r]^-{f_!} \ar[d]_{\cap[M]}
            & \HH^{\alpha+\dim Q - \dim M}(Q) \ar[d]^\iso \\
        H^\GG_{\dim M - \alpha}(M) \ar[r]_-{f_*} & H^\GG_{\dim M - \alpha}(Q).
    }
\]
The left vertical map need not be an isomorphism, but we need only the right vertical
map to be one in order to define $f_!$.
Our primary use of this general pushforward map will be the following.

\begin{definition}
Let $f\colon N\to M$ be a map where
$(N;P)$ is a singular $\GG$-manifold with codimension-2 singularities $P$,
$M$ is a smooth closed $\GG$-manifold, and $\dim N\in RO(\Pi M)$. Then
we define
\[
    [N;P]^* = f_!(1) \in \HH^{\dim M - \dim N}(M).
\]
\end{definition}

Again, this means that $[N;P]^*$ is the cohomology class Poincar\'e dual
to $f_*[N]$.
A result of Hastings and Waner in \cite{HM:singularities} shows that, after passing further
to ``virtual manifolds,'' every homology element
can be represented in this way and that, in fact, the ordinary homology of $M$ is isomorphic
to the cobordism group of virtual singular manifolds over $M$.
We will not use the full force of this result, but will represent elements using singular manifolds
when it makes our arguments and results clearer.

\begin{example}
The element $\zeta_0\in \HH^{\Omega_0}(\Xpq pq)$ can be represented as
\[
    \zeta_0 = [\Xpq pq; \Xpq{p}{(q-1)}]^*.
\]
Here, we let $(N,\bndry N)\subset \Xpq pq$ be the complement of an open normal tube around
$\Xpq p{(q-1)}$, with $\phi\colon \bndry N\to \Xpq p{(q-1)}$ the projection of the sphere bundle
of the normal bundle.
Because $N^\GG\intersect \Xp p = \emptyset$, we may reinterpret the dimension of $N$ to be
\[
    \dim N = \dim\Xpq pq - \Omega_0.
\]
We have
\[
    \dim \Xpq p{(q-1)} = \dim\Xpq pq - \chi\omega = \dim\Xpq pq - \Omega_0 - 2,
\]
so $(\Xpq pq; \Xpq p{(q-1)})$ is a $(\dim \Xpq pq - \Omega_0)$-dimensional
singular manifold with codimension-2 singularities. With a little more work
we can show that $[\Xpq pq;\Xpq p{(q-1)}]^* = \zeta_0$ as claimed.
\end{example}

\bibliography{Bibliography}{}

\providecommand{\bysame}{\leavevmode\hbox to3em{\hrulefill}\thinspace}
\providecommand{\MR}{\relax\ifhmode\unskip\space\fi MR }
\providecommand{\MRhref}[2]{%
  \href{http://www.ams.org/mathscinet-getitem?mr=#1}{#2}
}
\providecommand{\href}[2]{#2}
\begin{thebibliography}{10}

\bibitem{Beaudry:Guide}
A.~Beaudry, C.~Lewis, C.~May, S.~Pauli, and E.~Tatum, \emph{A guide to
  equivariant parametrized cohomology}, Topology Appl. \textbf{376} (2025),
  Paper No. 109449. \MR{4965750}

\bibitem{Braz:equivenumerative}
T.~Brazelton, \emph{Equivariant enumerative geometry}, Adv. Math. \textbf{461}
  (2025), Paper No. 110082, 39. \MR{4840971}

\bibitem{Co:InfinitePublished}
S.~R. Costenoble, \emph{The {$RO(\Pi B)$}-graded {$C_{2}$}-equivariant ordinary
  cohomology of {$B_{C_2}U(1)$}}, Topology Appl. \textbf{338} (2023), Paper No.
  108660, 53. \MR{4629789}

\bibitem{CH:geometric}
S.~R. Costenoble and T.~Hudson, \emph{A geometric {$C_2$}-equivariant
  {B}\'ezout theorem}, preprint, \url{https://arxiv.org/abs/2312.00559}, 2023.

\bibitem{CH:bt2}
\bysame, \emph{The {$\GG$}-equivariant ordinary cohomology of {$B T^2$}},
  preprint, \url{https://arxiv.org/abs/2411.06470}, 2024.

\bibitem{CH:bu2}
\bysame, \emph{The {$\GG$}-equivariant ordinary cohomology of {$B U(2)$}},
  preprint, \url{https://arxiv.org/abs/2411.06411}, 2024.

\bibitem{CH:QuadricsII}
\bysame, \emph{The {$C_2$}-equivariant ordinary cohomology of complex quadrics
  {II}: The symmetric case}, preprint, 2025.

\bibitem{CH:QuadricsIII}
\bysame, \emph{The {$C_2$}-equivariant ordinary cohomology of complex quadrics
  {III}: Exceptional cases}, preprint, 2025.

\bibitem{CHTFiniteProjSpace}
S.~R. Costenoble, T.~Hudson, and S.~Tilson, \emph{The {$C_2$}-equivariant
  cohomology of complex projective spaces}, Adv. Math. \textbf{398} (2022),
  Paper No. 108245, 69pp. \MR{4388953}

\bibitem{CHTAlgebraic}
\bysame, \emph{An algebraic {$C_2$}-equivariant {B}\'ezout theorem}, Algebr.
  Geom. Topol. \textbf{24} (2024), no.~4, 2331--2350. \MR{4776283}

\bibitem{CostenobleWanerBook}
S.~R. Costenoble and S.~Waner, \emph{Equivariant ordinary homology and
  cohomology}, Lecture Notes in Mathematics, vol. 2178, Springer, Cham, 2016.
  \MR{3585352}

\bibitem{Dug:AHSSforKR}
D.~Dugger, \emph{An {A}tiyah-{H}irzebruch spectral sequence for {$KR$}-theory},
  $K$-Theory \textbf{35} (2005), no.~3-4, 213--256 (2006). \MR{2240234}

\bibitem{DuggerGrass}
\bysame, \emph{Bigraded cohomology of {$\mathbb{Z}/2$}-equivariant
  {G}rassmannians}, Geom. Topol. \textbf{19} (2015), no.~1, 113--170.
  \MR{3318749}

\bibitem{DuggerHazelMayModules}
Daniel Dugger, Christy Hazel, and Clover May, \emph{Equivariant
  {$\underline{\mathbb Z/\ell}$}-modules for the cyclic group {$C_2$}}, J. Pure
  Appl. Algebra \textbf{228} (2024), no.~3, Paper No. 107473, 48. \MR{4612710}

\bibitem{EdidinGraham:quadricbundles}
D.~Edidin and W.~Graham, \emph{Characteristic classes and quadric bundles},
  Duke Math. J. \textbf{78} (1995), no.~2, 277--299. \MR{1333501}

\bibitem{3264EisenbudHarris}
D.~Eisenbud and J.~Harris, \emph{3264 and all that---a second course in
  algebraic geometry}, Cambridge University Press, Cambridge, 2016.
  \MR{3617981}

\bibitem{FerlandLewis}
K.~K. Ferland and L.~G. Lewis, Jr., \emph{The {$R{\rm O}(G)$}-graded
  equivariant ordinary homology of {$G$}-cell complexes with even-dimensional
  cells for {$G={\mathbb{ Z}}/p$}}, Mem. Amer. Math. Soc. \textbf{167} (2004),
  no.~794, viii+129. \MR{2025457}

\bibitem{FultonIntersection}
W.~Fulton, \emph{Intersection theory}, second ed., Ergebnisse der Mathematik
  und ihrer Grenzgebiete. 3. Folge. A Series of Modern Surveys in Mathematics
  [Results in Mathematics and Related Areas. 3rd Series. A Series of Modern
  Surveys in Mathematics], vol.~2, Springer-Verlag, Berlin, 1998.

\bibitem{HM:singularities}
H.~M. Hastings and S.~Waner, \emph{{$G$}-bordism with singularities and
  {$G$}-homology}, Pacific J. Math. \textbf{119} (1985), no.~1, 125--151.
  \MR{797019}

\bibitem{HazelFundamental}
C.~Hazel, \emph{Equivariant fundamental classes in {${\rm RO}(C_2)$}-graded
  cohomology with {$\mathbb Z/2$}-coefficients}, Algebr. Geom. Topol.
  \textbf{21} (2021), no.~6, 2799--2856. \MR{4344871}

\bibitem{HazelSurfaces}
\bysame, \emph{The cohomology of {$C_2$}-surfaces with {$\underline{\mathbb
  Z}$}-coefficients}, J. Homotopy Relat. Struct. \textbf{18} (2023), no.~1,
  71--114. \MR{4555910}

\bibitem{HHRKervaire}
M.~A. Hill, M.~J. Hopkins, and D.~C. Ravenel, \emph{On the nonexistence of
  elements of {K}ervaire invariant one}, Ann. of Math. (2) \textbf{184} (2016),
  no.~1, 1--262. \MR{3505179}

\bibitem{Hogle}
E.~Hogle, \emph{{$RO(C_2)$}-graded cohomology of equivariant {G}rassmannian
  manifolds}, New York J. Math. \textbf{27} (2021), 53--98. \MR{4195417}

\bibitem{HoglePoincarepoly}
\bysame, \emph{Bigraded {P}oincar\'e{} polynomials and the equivariant
  cohomology of {${\rm Rep}(C_2)$}-complexes}, J. Homotopy Relat. Struct.
  \textbf{20} (2025), no.~3, 417--435. \MR{4944169}

\bibitem{HogleMayFreeness}
E.~Hogle and C.~May, \emph{The freeness theorem for equivariant cohomology of
  {${\rm Rep}(C_2)$}-complexes}, Topology Appl. \textbf{285} (2020), 107413,
  30. \MR{4174010}

\bibitem{KronholmFree}
W.~Kronholm, \emph{A freeness theorem for {$RO(\mathbb{Z}/2)$}-graded
  cohomology}, Topology Appl. \textbf{157} (2010), no.~5, 902--915.
  \MR{2593703}

\bibitem{KronholmSerre}
\bysame, \emph{The {${\rm RO}(G)$}-graded {S}erre spectral sequence}, Homology
  Homotopy Appl. \textbf{12} (2010), no.~1, 75--92. \MR{2607411}

\bibitem{LewisCP}
L.~G. Lewis, Jr., \emph{The {$R{\rm O}(G)$}-graded equivariant ordinary
  cohomology of complex projective spaces with linear {${\bf Z}/p$} actions},
  Algebraic topology and transformation groups ({G}\"{o}ttingen, 1987), Lecture
  Notes in Math., vol. 1361, Springer, Berlin, 1988, pp.~53--122. \MR{979507}

\end{thebibliography}
\bibliographystyle{amsplain} 

\end{document}